 \def\NN{{\mathbb N}}  
\def\QQ{{\mathbb Q}} \def\RR{{\mathbb R}} \def\SS{{\mathbb S}} \def\TT{{\mathbb T}}
  \def\cG{{\cal G}}  \def\cS{{\cal S}}
    \def\cT{{\cal T}}
   \def\cO{{\cal O}}
\def\cF{{\cal F}}  \def\cL{{\cal L}}
\newtheorem{theorem}{{Theorem}}[section]
\newtheorem{proposition}[theorem]{{Proposition}}
\newtheorem{lemma}[theorem]{{Lemma}}
\newtheorem{corollary}[theorem]{{Corollary}}
\newtheorem{fact}[theorem]{{Fact}}
\newtheorem{problem}[theorem]{{Problem}}
\theoremstyle{definition}
\newtheorem{definition}[theorem]{{Definition}}
\theoremstyle{remark}
\newtheorem{remark}[theorem]{{Remark}}
\newtheorem{examples}[theorem]{{\it Examples}}
\title{A  spectral-like decomposition for transitive Anosov flows\\ in dimension three}
\author{F. Beguin, C. Bonatti and B. Yu
\thanks{The authors gratefully acknowledge financial support from the Agence Nationale de la Recherche (ANR project DynNonHyp) and the National Natural Science Foundation of China (NSFC 11471248) during the elaboration of this paper.}}
\date{\today}
\begin{document}

\maketitle

\hfill \emph{Dedicated to the memory of Dmitri V. Anosov (1936-2014)}

\begin{abstract}
Given a (transitive or non-transitive) Anosov vector field $X$ on a closed three-dimensional manifold~$M$, one may try to decompose $(M,X)$ by cutting $M$ along two-tori transverse to $X$. We prove that one can find a finite collection $\{T_1,\dots,T_n\}$ of pairwise disjoint, pairwise non-parallel incompressible tori transverse to $X$, such that the maximal invariant sets  $\Lambda_1,\dots,\Lambda_m$ of the connected components $V_1,\dots,V_m$ of $M-(T_1\cup\dots\cup T_n)$ satisfy the following properties:
\begin{itemize}[topsep=\parsep,itemsep=0cm]
\item[--] each $\Lambda_i$ is a compact invariant locally maximal transitive set for $X$;
\item[--] the collection $\{\Lambda_1,\dots,\Lambda_m\}$ is canonically attached to the pair $(M,X)$ (\emph{i.e.} it can be defined independently of the collection of tori $\{T_1,\dots,T_n\}$);
\item[--] the $\Lambda_i$'s are the smallest possible: for every (possibly infinite) collection $\{S_i\}_{i\in I}$ of tori transverse to $X$, the $\Lambda_i$'s are contained in  the maximal invariant set of $M-\cup_i S_i$.
\end{itemize}
To a certain extent, the sets $\Lambda_1,\dots,\Lambda_m$ are analogs (for Anosov vector field in dimension 3) of the basic pieces which appear in the spectral decomposition of a non-transitive axiom~A vector field. Then we discuss the uniqueness of such a decomposition: we prove that the pieces of the decomposition $V_1,\dots,V_m$, equipped with the restriction of the Anosov vector field $X$, are ``almost unique up to topological equivalence".
\end{abstract}

\section{Introduction}

Anosov flows (originally called $U$-system) are named after D.V. Anosov. In his celebrated paper \cite{Anosov1967}, Anosov proved that every Anosov flow is both structure stable and ergodic. Anosov flows are generalizations of the geodesic flow on closed Riemannian manifold with negative curvature, motivated by an important property of these geodesic flows: the whole underling manifold is a hyperbolic set for the flow. To be precise, for a closed  Riemannian manifold $M$, a nonsingular $C^r$ ($r\geq 1$) vector field $X$ on $M$ is called an \emph{Anosov vector field} (and the corresponding flow $(X^t)$ is called an \emph{Anosov flow}) if there exists  a $X$-invariant splitting $TM=E^s \oplus \mathbb{R} X \oplus E^u$ and some constants $C>0, \lambda>1$ such that: $\| DX_t (v) \| \leq C e^{-\lambda t} \| v \|$ for any $v\in E^s, t\geq 0$ and $\| DX_{-t} (v)  \| \leq C e^{-\lambda t} \| v \|$ for any $v\in E^u, t\geq 0$.

It is natural to develop a qualitative understanding of these flows. Many works have been done in this direction, although a complete classification seems to be absolutely out of reach, even in dimension~3. Let us cite a few papers:
\begin{itemize}
  \item Plante and Ghys classified Anosov flows on closed three-manifolds which are either torus bundles over the circle or circle bundles over a surface (\cite{Plante1981,Ghys1984}),
  \item Ghys classified Anosov flows on closed three-manifolds whose stable/unstable bundles are smooth  (\cite{Ghys1987}),
  \item Barbot and Fenley associated to each  Anosov flow $(X^t)$ on a closed three-manifold a pair of one-dimensional foliations $(\cG^s,\cG^u)$ on a subset of $\RR^2$, and explained how the dynamical properties of the flow $(X^t)$ translate as geometrical properties of the pair  $(\cG^s,\cG^u)$, and vice-versa (\cite{Fenley1994,Barbot1995});
  \item Barbot and Fenley gave a description of certain classes of (pseudo-)Anosov flows on certain toroidal manifolds (see e.g. \cite{BarbotFenley2013, BarbotFenley2014}).
\end{itemize}
Besides these efforts to classify some particular classes of Anosov flows in dimension~3, many new examples of Anosov have been discovered. For example:
\begin{itemize}
    \item Handel and Thurston have constructed an Anosov flow on a closed three-manifold which is neither a circle bundle over a surface, nor a surface bundle over the circle (\cite{HandelThurston1980});
     \item Goodman (\cite{Goodman1983}) has developed the technique used by Handel and Thurston. Her construction, called \emph{Dehn-Goodman surgery}, is a fundamental tool to construct various types of examples of Anosov flows;
    \item Franks and Williams have built an Anosov flow (on a closed three-manifold) which is not transitive (\cite{FranksWilliams1980});
    \item Bonatti and Langevin have constructed a transitive Anosov flow $X$ on a closed three-manifold $M$, such that there is a torus embedded in $M$ and transverse to $X$, but $X$ is not a suspension (\cite{BonattiLangevin1994}; see also~\cite{Barbot1998}).
 \end{itemize}
These examples show that the realm of Anosov flows in dimension three is much richer than what can be expected at first sight.
Both Franks-Williams' and Bonatti-Langevin's examples have been built by ``gluing hyperbolic plugs along their boundaries".
In~\cite{BeBoYu}, we have proved a technical result which allows to build many new examples of Anosov vector fields.
 Let us briefly recall this construction tool.

 \begin{definition}
 A \emph{hyperbolic plug} is a couple $(V,X)$ where $V$ is a compact three-dimensional manifold with boundary, and $X$ is a vector field on $V$,
 so that $X$ is transverse to the boundary of $V$, and so that the maximal invariant set $\Lambda:=\bigcap_{t\in\RR} X^t(V)$ is a saddle hyperbolic set for $X$.
 \end{definition}

 Consider such an orientable hyperbolic plug $(V,X)$, with maximal invariant set $\Lambda$. Denote by $\partial^{in} V$ (resp. $\partial^{out} V$)
 the union of the connected components of $\partial V$ along which the vector fields $X$ is point inwards (resp. outwards) $U$. Assume that there
  exists a diffeomorphism $\phi:\partial^{out} V\to\partial^{in} V$. Then one might consider the closed manifold $M_\phi:=V/\phi$ and the vector
  field $X_\phi$ induced by $X$ on $M_\phi$. In~\cite{BeBoYu}, we give some very general sufficient conditions for $X$ to be
   (topologically equivalent to) an Anosov vector field.
This allows us to produce many new examples of (transitive or non-transitive) Anosov vector fields in dimension three.

If $(M_\phi,X_\phi)$ has been constructed using the process described above, then one can recover the hyperbolic plug $(V,X)$ by cutting $M$ along a the projection of $\partial V$, \emph{i.e.} along finite collection of pairwise disjoint two-tori embedded in $M_\phi$ and transverse to $X_\phi$ \footnote{Recall that a closed connected surface $S$ which is transverse to an Anosov vector field $X$ in a closed orientable three-manifold is necessarily a two-torus, since the weak stable foliation of $X$ induces a non-singular foliation on $S$.}. More generally, any Anosov vector field on some closed three-manifold can be decomposed into hyperbolic plugs by cutting the manifold along tori that are transverse to the vector field. Let us formalize this:

\begin{definition}
\label{d.plug-decomposition}
Let $M$ be a closed orientable three-manifold, $X$ be an Anosov vector field on $M$, and $\cT=\{T_1,\dots,T_n\}$ be a finite collection of pairwise disjoint  tori in $M$ that are transverse to $X$. Choose $\epsilon>0$ small enough so that the tubular neighborhoods $X^{(-\epsilon,\epsilon)}(T_1),\dots,X^{(-\epsilon,\epsilon)}(T_n)$ of $T_1,\dots,T_n$ are pairwise disjoint, and denote by $V_1,\dots,V_m$ the connected components of $M\setminus \left(X^{(-\epsilon,\epsilon)}(T_1)\cup\dots\cup X^{(-\epsilon,\epsilon)}(T_n)\right)$. Then $(V_1,X_{|V_1}),\dots,(V_m,X_{|V_m})$ are hyperbolic plugs, and $(M,X)$ can be recovered (up to topological equivalence) by gluing these hyperbolic plugs together along their boundaries. We say that $V_1,\dots,V_m$ is a \emph{plug decomposition} of $(M,X)$ (more precisely, the plug associated of $(M,X)$ with the collection of tori $\cT$). 
\end{definition}

Similar to the JSJ decomposition of three-manifolds, one might hope to find a canonical plug decomposition of Anosov vector fields on three-manifold:

\begin{problem}
\label{p.decomposition}
Given an Anosov vector field $X$ on a closed orientable three-manifold $M$, can one find a ``canonical" plug decomposition of $(M,X)$~?\footnote{Of course, there exists a stupid canonical plug decomposition: indeed $(M,X)$ can be seen as the plug decomposition of $(M,X)$ associated to the empty collection of tori~! But one would like a plug decomposition which is not always trivial.}
\end{problem}

%

In order to discuss Problem~\ref{p.decomposition}, we are led to consider the collection of all the tori which are transverse to a given Anosov vector field:

\begin{definition}
\label{d.core}
Let $X$ be an Anosov vector field on a closed three-dimensional orientable manifold $M$. Let $\cT$ be the union of all the two-tori that are embedded in $M$ and transverse to $X$. We define the \emph{core} of $X$ to be the set $\mathrm{Core}(X):=M\setminus \cT$.
\end{definition}

In other words, a point $x\in M$ is in the core of $X$ if it impossible to find a two-torus $T$ embedded in $M$ and transverse to $X$, such that $x\in T$. Observe that, if a point $x$ belongs to a two-torus $T$ embedded in $M$, transverse to $X$, then the same is true for every point $y$ close to $x$, and for every point $y$ on the orbit of $x$. This shows that \emph{the core of an Anosov vector field $X$ is compact and invariant under the flow of $X$}.

\begin{examples}\label{examples1}
Let us describe the core of various examples of Anosov vector fields.
\begin{enumerate}
  \item  If $X$ is the suspension of Anosov diffeomorphism, then the core of $X$ is empty 
  (see corollary~\ref{c.characterization-suspension}).
  \item  If $M$ is the unit tangent bundle of a closed hyperbolic surface and $X$ is the geodesic flow on $M$, 
  then there does not exist any torus embedded in $M$ and transverse to $X$  (indeed $X$ which is topologically equivalent to $-X$ by a topological equivalence which is isotopic to the identity~; one can easily prove that transitive Anosov vector fields bearing this property do not admit any transverse torus, see \emph{e.g.} \cite[proof of Proposition 4.9]{Barbot1995}). Therefore the core of $X$ is the whole manifold $M$.
  \item If $M$ is atoroidal, then, for every Anosov vector field $X$ on $M$, the core of $X$ is the whole manifold $M$. The first examples of Anosov vector fields on closed atoroidal three-dimensional manifolds were constructed by Goodman (\cite{Goodman1983}).
  \item The core of the Bonatti-Langevin example (\cite{BonattiLangevin1994}) is a single periodic orbit $\gamma$. Let us explain why. The Bonatti-Langevin manifold $M$ is a graph manifold with one Seifert piece $M-T$ obtained by cutting $M$ along a JSJ torus $T$. The maximal invariant set  of $M-T$ for the Bonatti-Langevin vector field $X$ is a periodic orbit $\gamma$. Therefore the core of $X$ is either empty or reduced to $\gamma$. But $\gamma$ is isotopic to the regular fiber of $M-T$, and the algebraic intersection number of the regular fiber of $M-T$ with every incompressible torus equals $0$. Since every torus transverse to an Anosov vector field is incompressible (\cite{Brunella1993,Fenley1995}), it follows that $\gamma$ does not intersect any torus transverse to $X$. Hence the core of $X$ is the periodic of orbit $\gamma$.
  \item  If $X$ is a non-transitive Anosov flow, the existence of Lyapunov functions implies that the core of $(M,X)$ is contained in the non-wandering set of $X$. In particular, in the case the non-transitive Anosov flow on a 3-manifold $M$ by gluing two figure eight  knot complement spaces constructed  by Franks and Williams, the core of $X$ exactly is the non-wandering set of $X$ since the gluing torus is the unique incompressible torus in $M$.
  \item  In~\cite{BeBoYu}, we have constructed many examples of transitive Anosov flows with complicated cores (typically, the core of such examples will be a transversally cantorian set). We will explain more about them later (see example~\ref{e.fine-decomposition}).
\end{enumerate}
\end{examples}


Let us recall that a \emph{basic set} (for a vector field $X$) is a compact transitive locally maximal hyperbolic set. Smale proved that every locally maximal hyperbolic set has only finitely many chain-recurrence classes, each of which is a basic set. As a consequence, every locally maximal hyperbolic set can be decomposed, in a unique way, as a 
finite union of pairwise disjoint basic sets. We shall prove the following theorem:

%

\begin{theorem}
\label{t.main}
Let $X$ be an Anosov vector field on a closed connected orientable 3-manifold $M$. Assume that $X$ is not topologically equivalent to a suspension.
\begin{enumerate}
\item The core of $X$ is a locally maximal set. It can be decomposed, in a unique way, as a finite union of pairwise disjoint basic sets: $\mathrm{Core}(X)=\Lambda_1\sqcup\dots\sqcup\Lambda_m$.
\item There exists a plug decomposition $V_1,\dots,V_m$ of $(M,X)$ such that the basic set $\Lambda_i$ is the maximal invariant set of $V_i$ for every $i\in\{1,\dots,m\}$.
\end{enumerate}
\end{theorem}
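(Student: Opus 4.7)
The plan is to exhibit the core as an intersection of plug-decomposition invariant sets, show that this intersection stabilises at a finite stage, and then refine the resulting family of tori so that each plug component contains exactly one basic set. For a finite collection $\mathcal{T} = \{T_1,\ldots,T_n\}$ of pairwise disjoint transverse tori, let $\Lambda(\mathcal{T})$ denote the maximal invariant set of the corresponding plug decomposition; it is the disjoint union of the maximal invariant sets of the hyperbolic plugs $V_1,\ldots,V_m$, and therefore a locally maximal hyperbolic set. Taking unions of collections of tori makes $\{\Lambda(\mathcal{T})\}$ a downward directed family. One has $\mathrm{Core}(X) \subseteq \Lambda(\mathcal{T})$ for every $\mathcal{T}$ (core orbits meet no transverse torus), and $\bigcap_{\mathcal{T}} \Lambda(\mathcal{T}) = \mathrm{Core}(X)$ (a point outside the core lies on some transverse torus $T$ and is therefore already excluded by the singleton family $\{T\}$).

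\textbf{Step 2 (finiteness: the main obstacle).} The heart of the proof is to produce a single finite $\mathcal{T}_0$ with $\Lambda(\mathcal{T}_0) = \mathrm{Core}(X)$; this does not follow from compactness alone, since a nested intersection of compact sets need not stabilise. The relevant inputs are the Kneser--Haken finiteness theorem (bounding the number of pairwise disjoint, pairwise non-parallel incompressible tori in $M$) together with the fact, cited in Example~\ref{examples1}(4), that every torus transverse to $X$ is incompressible. I would fix a maximal finite family $\{T_1,\ldots,T_n\}$ of pairwise disjoint, pairwise non-parallel transverse tori. Given $x \notin \mathrm{Core}(X)$, there is a transverse torus $T \ni x$; by maximality, $T$ is either parallel to some $T_i$ or can be isotoped through transverse tori to a torus disjoint from and parallel to some $T_i$. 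A positioning argument---using that transversality to $X$ is an open condition and that transverse-isotopic tori share the same flow saturation---should then show that the orbit of $x$ must cross one of the $T_i$'s, so that $x \notin \Lambda(\mathcal{T}_0)$. I expect this positioning step to be the technical core: it will require ``optimal-position'' results for transverse tori in the spirit of Barbot, and careful book-keeping using the JSJ structure of $M$.

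\textbf{Step 3 (spectral decomposition and plug refinement).} Once $\Lambda(\mathcal{T}_0) = \mathrm{Core}(X)$, the core is locally maximal and hyperbolic, so Smale's spectral theorem supplies the unique decomposition $\mathrm{Core}(X) = \Lambda_1 \sqcup \cdots \sqcup \Lambda_m$ into basic sets, which settles part~(1). For part~(2), I refine $\mathcal{T}_0$ whenever a plug component contains more than one basic set: between two such basic sets, a Conley-type filtration on the hyperbolic plug produces a trapping region whose boundary is a closed surface transverse to $X$, and this boundary must be a union of tori by the footnote on closed transverse surfaces in an Anosov flow. Adding these new tori to $\mathcal{T}_0$ preserves the equality $\Lambda(\mathcal{T}_0) = \mathrm{Core}(X)$ (new transverse tori are automatically disjoint from the core) while strictly increasing the number of plug components, so finitely many iterations leave each plug component carrying exactly one basic set, yielding the required plug decomposition.
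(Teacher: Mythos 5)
Your Step~1 is fine, and your Step~3 is essentially the paper's strategy (Conley's Lyapunov function supplies transverse level tori that separate the basic sets), but you omit a necessary last step: after adding the separating tori some plug components may have \emph{empty} maximal invariant set. To get exactly $m$ components, one with each $\Lambda_i$, the paper discards redundant tori by passing to a maximal sub-collection of pairwise non-parallel ones, using Brunella's lemma (parallel disjoint transverse tori have the same saturation) and its converse (a plug with empty maximal invariant set is a product $\TT^2\times[0,1]$). Without this, you get a plug decomposition whose components \emph{contain} the $\Lambda_i$'s but may also include trivial plugs.

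The genuine gap is in Step~2, which you correctly flag as the heart of the proof, but whose route is different from --- and, as written, weaker than --- the paper's. The paper proves the finiteness with an elementary cohomological argument centered on \emph{periodic orbits}: for a periodic orbit $\gamma$ and a transverse torus $T$, the geometric and algebraic intersection numbers agree (by transversality), so $[\gamma]\vee[T]\neq 0$ in $H^*(M,\QQ)$ iff $\gamma\cap T\neq\emptyset$; an infinite sequence of tori each hitting a periodic orbit missed by all the previous ones would then produce infinitely many linearly independent classes in $H^1(M,\QQ)$, which is impossible. Density of periodic orbits inside the basic sets of the resulting locally maximal set then upgrades ``hits all periodic orbits outside the core'' to the required saturation equality. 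Your Kneser--Haken approach would instead need the claim that, for a maximal finite family $\{T_1,\dots,T_n\}$ of pairwise disjoint, pairwise non-parallel transverse tori, any transverse torus $T$ can be isotoped \emph{through transverse tori} to one parallel to some $T_i$. Maximality only tells you that $T$ either is parallel to some $T_i$ or \emph{meets} some $T_i$; in the latter case there is no reason a transverse isotopy to a disjoint position exists, an ordinary isotopy does not preserve saturation without a substantial ``optimal position'' theorem, and even if one could deform $T$ to a parallel copy of some $T_i$ the deformed torus need no longer pass through the point $x$ you began with. Also note that the desingularisation surgery (Proposition~\ref{p.disjoint}) resolves intersections only at the cost of possibly increasing the number of tori, so it cannot by itself bootstrap the Kneser--Haken bound into the saturation statement you need. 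I recommend abandoning the positioning route and adopting the cohomological argument: it is shorter, requires no 3-manifold classification input beyond the fact (already cited in the paper) that transverse tori are incompressible, and it interfaces cleanly with the spectral decomposition via density of periodic orbits.
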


\begin{definition}
The basic sets $\Lambda_1,\dots,\Lambda_m$ defined by item~1 of Theorem~\ref{t.main} will be called the \emph{core basic sets} of $X$.  A plug decomposition of $(M,X)$  satisfying item~2 of Theorem~\ref{t.main} (\emph{i.e.} a plug decomposition $V_1,\dots,V_m$ such that the maximal invariant set of $V_i$ is the core basic set $\Lambda_i$ for every $i$) will be called a \emph{fine plug decomposition}.
\end{definition}

The adjective \emph{fine}  is justified by the following observation.

\begin{remark}
Let $V_1,\dots,V_m$ be a fine plug decomposition of $(M,X)$, and $W_1,\dots,W_p$ be an arbitrary plug decomposition of $(M,X)$. By definition of a fine plug decomposition, the maximal invariant set of $V_i$ is the core basic set $\Lambda_i$ defined by item~1 of Theorem~\ref{t.main}. Denote by $L_i$ the maximal invariant set of $W_j$. Then $L_1\sqcup\dots\sqcup L_p$ is the maximal invariant set the complement of a finite union of tori in that are transverse to $X$. Hence, the core of $X$ is contained in $L_1\sqcup\dots\sqcup L_p$. Hence, for every $i$, the basic set $\Lambda_i$ is contained in $L_1\sqcup\dots\sqcup L_p$. And since $\Lambda_i$ is connected (it is transitive), it must be contained in $L_{j_i}$ for some $j_i\in\{1,\dots,p\}$. In other words, \emph{the maximal invariant sets of the elements of a fine plug decomposition of $(M,X)$  are contained in the maximal invariants sets of the elements of any other plug decomposition of $(M,X)$}.
\end{remark}


\begin{examples}\label{e.fine-decomposition}
Let us describe the outcome of theorem~\ref{t.main} in various situations.
\begin{enumerate}
\item If there does not exist any two-torus transverse to $X$ (\emph{e.g.} if $M$ is atoroidal, or if $M$ is 
the unit tangent bundle of an hyperbolic compact surface and $X$ is the geodesic flow on $M$), 
then $\mathrm{Core}(X)=M$ is a basic set by itself, and there is no torus embedded in $M$ and transverse to $X$. With the notations of theorem~\ref{t.main}: $m=1$ and $n=0$. Of course, in this situation, theorem~\ref{t.main} is completely trivial and useless.
\item If $(M,X)$ is the example constructed by Bonatti and Langevin in~\cite{BonattiLangevin1994}, then $\mathrm{Core}(X)$ is a single periodic orbit, and there is only one torus embedded in $M$ and transverse to $X$ up to isotopy along the orbits of $X$. With the notations of theorem~\ref{t.main}: $m=n=1$.
\item More generally, for the examples of Anosov flows on Seifert or graph manifolds studied by Barbot and 
Fenley in~\cite{BarbotFenley2013}, $\mathrm{Core}(X)$ is made of a finite number of saddle periodic orbits, 
and the basic sets $\Lambda_1,\dots,\Lambda_m$ are precisely these periodic orbits. 
This can be proved using the same arguments as in Examples~\ref{examples1} for the Bonatti-Langevin example.
\item If  $(M,X)$ is the example constructed by Franks and Williams in~\cite{FranksWilliams1980} where $M$ can be obtained by gluing two figure eight  knot complement spaces.  $\mathrm{Core}(X)$ is made of two basic sets $\Lambda_1$ and $\Lambda_2$. In this case, $m=2$ and $n=1$.
\item Our paper~\cite{BeBoYu} provides many examples where the basic sets $\Lambda_1,\dots,\Lambda_m$ are non-trivial saddle hyperbolic sets (\emph{i.e.} hyperbolic sets that are not reduced to periodic orbits). The simplest way to construct such examples is the following. Take a transitive Anosov vector field $X_0$ on a manifold $M_0$, and pick up two periodic orbits $O$ and $O'$ of this vector field. Make a DA-bifurcation near $O$ is the stable direction and a DA bifurcation in the unstable direction near $O'$. This creates a new vector field $X_1$ with an attracting periodic orbit and a repelling periodic orbit. Let $M_1$ be the manifold with boundary obtained by excising a small tubular neighborhood of each of these two periodic orbits. The boundary of $M_1$ is made of two tori $T$ and $T'$, that are transverse to $X_1$. By gluing $T$ on $T'$, one gets a closed manifold $M$, endowed with a vector field $X$ induced by $X_{1|M_1}$.  In~\cite{BeBoYu}, we have proved that it is possible to perform this gluing in such a way that $X$ is a transitive Anosov vector field. If there does not exist any torus embedded in the manifold $M_0$ transverse to the vector field $X_0$, then the core of the new vector field $X$ is a transitive non-trivial saddle hyperbolic set (and therefore, for this vector field, $m=n=1$).
 \end{enumerate}
\end{examples}

To what extend does Theorem~\ref{t.main} solve problem~\ref{p.decomposition}~? Theorem~\ref{t.main}  states the existence of a particular plug decomposition, which we call a \emph{fine plug decomposition}. Is this fine plug decomposition unique~? The answer to this question depends on the equivalence relation we put on the set of plug decompositions. Several natural equivalence relations can be proposed.

\begin{definition}
Let $X$ be an Anosov vector field on a closed orientable three-manifold $M$. Two plug decompositions $V_1,\dots,V_p$ and $W_p,\dots,W_p$ of $(M,X)$ are said to be:
\begin{enumerate}
\item \emph{residually equivalent} if, for every $i$, the plugs $(V_i,X_{|V_i})$ and $(W_{\sigma(i)},X{|W_{\sigma(i)}})$ have the same maximal invariant set,
\item \emph{piecewise topologically equivalent} if, for every $i$, there exists a homeomorphism $h_i:V_i\to W_{\sigma(i)}$ mapping each oriented orbit of $X_{V_i}$ on an oriented orbit of $X_{|W_{\sigma(i)}}$,
\item \emph{globally topologically equivalent} if there exists a homeomorphism $h:M\to M$ mapping each oriented orbit of $X$ on an oriented orbit of $X$, and such that $h(V_i)=W_{\sigma(i)}$ for every $i$,
\item \emph{flow isotopy equivalent} if there exists a function $\theta:M\to \RR$ such that the homeomorphism $h:M\to M$ defined by $h(x):=X^{\theta(x)}(x)$ maps $V_i$ on $W_{\sigma(i)}$ for every $i$,
\end{enumerate}
where $\sigma$ is a permutation of $\{1,\dots,n\}$.
\end{definition}

Clearly, flow isotopy equivalence implies global topological equivalence, which implies piecewise topological equivalence, which implies residual equivalence. The following fact is a straightforward consequence of the definitions:

\begin{fact}
Two fine plug decompositions of the same Anosov vector field are residually equivalent.
\end{fact}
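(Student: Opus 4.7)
The plan is to derive this fact directly from the definitions, since essentially all of the content lies in Theorem~\ref{t.main}. By definition, a fine plug decomposition $V_1,\dots,V_m$ of $(M,X)$ is a plug decomposition whose pieces are indexed by the same integers $1,\dots,m$ as the core basic sets $\Lambda_1,\dots,\Lambda_m$ from item~1 of Theorem~\ref{t.main}, in such a way that the maximal invariant set of $V_i$ is precisely $\Lambda_i$. Crucially, item~1 of that theorem asserts that the decomposition $\mathrm{Core}(X)=\Lambda_1\sqcup\dots\sqcup\Lambda_m$ is canonically attached to $(M,X)$, independent of the plug decomposition one chooses.

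Concretely, given two fine plug decompositions $V_1,\dots,V_m$ and $W_1,\dots,W_{m'}$ of the same Anosov vector field $(M,X)$, I would first observe that both $m$ and $m'$ must equal the number of core basic sets, so $m=m'$. Indeed, the map sending a piece $V_i$ to the index $k$ such that its maximal invariant set equals $\Lambda_k$ is a function $\alpha:\{1,\dots,m\}\to\{1,\dots,m\}$; it is injective because the $V_i$'s are pairwise disjoint while each $\Lambda_k$ is connected (being transitive), so $\Lambda_k$ lies in at most one $V_i$; it is surjective because the maximal invariant set of $V_1\sqcup\dots\sqcup V_m$ contains $\mathrm{Core}(X)=\Lambda_1\sqcup\dots\sqcup\Lambda_m$, so every $\Lambda_k$ is realized. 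By the labelling convention in the definition of a fine plug decomposition, $\alpha$ is in fact the identity, and similarly for the map $\beta$ associated to $W_1,\dots,W_m$. Setting $\sigma=\beta^{-1}\circ\alpha=\mathrm{id}$, we see that $V_i$ and $W_{\sigma(i)}=W_i$ share the same maximal invariant set $\Lambda_i$, which is exactly residual equivalence.

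There is no real obstacle here: the statement is essentially a bookkeeping consequence of the uniqueness part of Theorem~\ref{t.main} together with connectedness of the basic sets. The substantive work to which one might be tempted to attribute difficulty, namely the canonicity of the $\Lambda_i$'s as subsets of $M$, has already been carried out in Theorem~\ref{t.main}; once that is in hand, the fact is immediate.
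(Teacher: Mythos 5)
Your proof is correct and takes the same route the paper intends: the statement is declared to be ``a straightforward consequence of the definitions,'' and your argument is precisely the unpacking of the definition of a fine plug decomposition (each $V_i$ has maximal invariant set the canonical core basic set $\Lambda_i$) together with the definition of residual equivalence, with $\sigma=\mathrm{id}$. The auxiliary maps $\alpha,\beta$ are a bit more machinery than needed---the indexing convention built into the definition already forces $\alpha=\beta=\mathrm{id}$ and $m=m'$---but there is no gap.
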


We will prove the following:

\begin{theorem}
\label{t.finitely-many}
An Anosov vector field on a closed orientable three-manifold admits at most finitely many fine plug decompositions up to piecewise topological equivalence.
\end{theorem}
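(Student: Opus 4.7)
The core basic sets $\Lambda_1,\dots,\Lambda_m$ given by Theorem~\ref{t.main} are canonically attached to $(M,X)$. Two fine plug decompositions $V_1,\dots,V_m$ and $W_1,\dots,W_m$ are piecewise topologically equivalent if and only if, after a permutation $\sigma$, each plug $(V_i,X_{|V_i})$ is individually topologically equivalent to $(W_{\sigma(i)},X_{|W_{\sigma(i)}})$. Thus the theorem reduces to proving, for each fixed $i$, that the set $\cP_i$ of topological equivalence classes of hyperbolic plugs $(V,X_{|V})$ with $V\subseteq M$ and maximal invariant set $\Lambda_i$, appearing as a piece of some fine plug decomposition of $(M,X)$, is finite.

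To bound $\cP_i$ I would first reduce to plugs contained in a canonical isolating neighborhood. Since $\Lambda_i$ is locally maximal by Theorem~\ref{t.main}, there exists a compact neighborhood $N_i$ of $\Lambda_i$ whose maximal invariant set is exactly $\Lambda_i$. Given any plug $V$ appearing in a fine plug decomposition, one may flow $\partial^{in}V$ forward and $\partial^{out}V$ backward in time by a sufficiently large amount so as to produce a smaller plug $V'\subseteq N_i$ with the same maximal invariant set $\Lambda_i$. This construction is a flow isotopy, in particular a topological equivalence, so it suffices to bound the number of topological equivalence classes of plugs $V\subseteq N_i$ with maximal invariant set $\Lambda_i$.

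Next I would analyze the boundary. Each boundary component of such a $V$ is a torus transverse to $X$, lying in the wandering region $N_i\setminus\Lambda_i$, and incompressible in $M$ (Brunella--Fenley). Classical finiteness of isotopy classes of incompressible tori in a compact irreducible $3$-manifold gives finitely many possibilities up to isotopy in $M$. A flow-box argument would then show that two transverse tori in $N_i\setminus\Lambda_i$ which are isotopic in $M$ are in fact flow-isotopic within $N_i\setminus\Lambda_i$: the region they bound contains no non-wandering dynamics, and transversality to the flow forces it to be a product $T^2\times I$ along the orbits of $X$. Combining these, the collection $\partial V$ admits only finitely many possibilities up to flow isotopy in $N_i$; since a plug is determined up to flow isotopy (and hence up to topological equivalence) by its maximal invariant set together with its boundary, the set $\cP_i$ is finite, and the theorem follows.

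The main obstacle is the flow-box step: proving that two transverse tori in $N_i\setminus\Lambda_i$ that are topologically isotopic in $M$ are actually flow-isotopic within $N_i\setminus\Lambda_i$. The delicate point is that an ambient isotopy in $M$ between two such tori could a priori cross $\Lambda_i$ or other parts of $\mathrm{Core}(X)$, so one must reroute the isotopy to remain in the wandering region and then straighten it along the flow using the absence of recurrence there. A minor additional check is the independence of the count from the choice of isolating neighborhood $N_i$, which follows by comparing any two choices through a common smaller isolating neighborhood.
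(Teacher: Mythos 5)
Your proposal hinges on the assertion that ``classical finiteness of isotopy classes of incompressible tori in a compact irreducible $3$-manifold gives finitely many possibilities up to isotopy in $M$.'' This is false: Kneser--Haken finiteness bounds the number of \emph{pairwise disjoint, pairwise non-parallel} incompressible surfaces, not the number of isotopy classes. In fact the paper's own Proposition~\ref{p.infinitely-many} constructs, in a fixed $(M,Z)$, infinitely many pairwise non-isotopic transverse tori bounding plugs around the same core basic set. More fundamentally, your chain of reductions — isotopy in $M$ implies flow isotopy in $N_i\setminus\Lambda_i$, hence finitely many boundaries up to flow isotopy, hence finitely many plugs — would establish finiteness of fine plug decompositions \emph{up to flow isotopy equivalence}, which Proposition~\ref{p.infinitely-many} explicitly disproves. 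So the approach cannot succeed even if the topological finiteness claim were repaired. (A secondary issue: the final step, ``a plug is determined up to flow isotopy by its maximal invariant set together with its boundary,'' is not substantiated; the ambient topology of $V$ is not a function of these two data alone, and there is no argument for it.)

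The paper's proof avoids counting embedded tori altogether, precisely because that count is infinite. Instead it works \emph{intrinsically}, with the germ of $X$ along $\Lambda_i$. It invokes the model theory of B\'eguin--Bonatti: every plug with maximal invariant set $\Lambda_i$ is obtained, up to topological equivalence, from the canonical model $(\widetilde V_i,\widetilde X_i)$ by attaching handles along pairs of discs in the complement of the entrance lamination $\cL^s(\widetilde V_i,\widetilde X_i)$ (Proposition~\ref{p.model-to-any}). The key new input is Proposition~\ref{p.neighbourhood-Anosov-type}: if the plug is of Anosov type, these discs must lie in pairwise distinct complementary components that are discs bounded by \emph{closed} leaves of $\cL^s$. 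Since $\cL^s$ has only finitely many closed leaves (Proposition~\ref{p.general-properties-lamination}), there are finitely many admissible handle-attachment patterns, and the resulting plug depends only on that pattern up to topological equivalence. This is why the theorem holds only up to \emph{topological} equivalence and not up to flow isotopy: infinitely many non-isotopic embeddings can realize the same abstract plug.
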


Moreover, we will describe some particular case where a fine plug decomposition is necessarily unique up to piecewise topological equivalence; see Proposition~\ref{p.filling-lamination-case}. On the other hand, we will give an example showing that fine plug decompositions are not unique up to flow isotopy equivalence:

\begin{proposition}
\label{p.infinitely-many}
There exists an Anosov vector field $Z$ on a closed orientable three-manifolds $M$, such that $(M,Z)$ admits infinitely many fine plug decompositions which are pairwise not flow isotopy equivalent.
 \end{proposition}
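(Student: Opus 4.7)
The plan is to construct explicitly an Anosov vector field $Z$ on a closed orientable 3-manifold $M$ whose core is a single non-trivial basic set $\Lambda$, and such that the open manifold $M\setminus\Lambda$ contains infinitely many compact transverse tori that are pairwise non-flow-isotopic, each a cross-section of the restricted flow $Z|_{M\setminus\Lambda}$, hence each bounding a fine plug for $(M,Z)$. Distinct cohomology classes in $H^1(M\setminus\Lambda,\ZZ)$ will force distinct flow-isotopy classes.

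To build $(M,Z)$, I would use the plug-gluing theorem of~\cite{BeBoYu} following the recipe of Example~\ref{e.fine-decomposition}(5): start from a transitive Anosov vector field $X_0$ on a closed orientable 3-manifold $M_0$ of first Betti number at least~$2$, which can be arranged by iterated Goodman surgeries on the geodesic flow of a closed hyperbolic surface. Pick two periodic orbits $O,O'$ of $X_0$; DA-bifurcate $O$ in the stable direction and $O'$ in the unstable direction to produce an attracting and a repelling orbit; excise small tubular neighborhoods of these orbits to obtain a compact plug $V_0$ with two transverse boundary tori $T^\pm$; and glue $T^+$ to $T^-$ by an admissible diffeomorphism~$\phi$ so that the induced vector field $Z$ on $M:=V_0/\phi$ is a transitive Anosov vector field with core a single non-trivial saddle basic set~$\Lambda$. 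Careful choice of $\phi$ should preserve enough cohomology that $H^1(M\setminus\Lambda,\RR)$ still has rank at least~$2$.

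The infinite family of fine plug decompositions then arises from the following observation: a transverse torus $T\subset M$ bounds a fine plug for $(M,Z)$ if and only if it is a cross-section of the non-recurrent flow $Z|_{M\setminus\Lambda}$, i.e.\ meets every orbit of $Z$ not contained in $\Lambda$. By a Fried-type classification adapted to this open setting, the flow-isotopy classes of such cross-sections are parameterized by the integer points of an open convex cone $\mathcal{C}\subset H^1(M\setminus\Lambda,\RR)$. Arranging the construction so that $\dim\mathcal{C}\geq 2$, one obtains infinitely many primitive integer classes inside $\mathcal{C}$, each realized by an embedded transverse torus $T_n$ giving rise to a fine plug decomposition $(V_n)$. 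Any flow isotopy $h(x)=Z^{\theta(x)}(x)$ preserves orbits, hence fixes $\Lambda$ setwise, and restricts to a self-homeomorphism of $M\setminus\Lambda$ isotopic to the identity; in particular the cohomology class $[T_n]\in H^1(M\setminus\Lambda,\ZZ)$ is a flow-isotopy invariant, so the $(V_n)$ are pairwise not flow-isotopy equivalent.

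The hard part will be arranging the construction so that the cross-section cone $\mathcal{C}$ of $(M\setminus\Lambda,Z|_{M\setminus\Lambda})$ really attains dimension at least~$2$. This is a dynamical, not merely topological, condition: the cone consists of those cohomology classes pairing positively with every periodic orbit of $Z$ lying in $M\setminus\Lambda$, and one must ensure that several independent directions in $H^1(M\setminus\Lambda,\RR)$ satisfy this positivity. The three ingredients to balance are the initial pair $(M_0,X_0)$, the choice of orbits $O,O'$ subject to DA-bifurcation, and the gluing $\phi$; the flexibility of the~\cite{BeBoYu} gluing is the main lever, and a homological tracking of the construction should confirm that at least two independent positive classes survive. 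Once this is achieved, the rest of the argument (Fried-type classification and the cohomological obstruction to flow isotopy) is essentially formal.
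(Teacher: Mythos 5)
Your approach is genuinely different from the paper's, which builds an explicit graph manifold $M$ by doubling a plug $U\cong\Sigma_{1,2}\times\SS^1$ with a product-like vector field, so that $\mathrm{Core}(Z)$ consists of four isolated saddle periodic orbits, then produces the infinite family $\cT_n$ from explicit curves in $\Sigma_{1,2}$ modified by powers of a Dehn twist, and distinguishes them by isotopy classes of surfaces inside the Seifert pieces. You instead propose a core equal to a single non-trivial basic set $\Lambda$ and appeal to a Schwartzman--Fried cone of cross-section classes in $H^1(M\setminus\Lambda,\RR)$. The cohomological endgame you describe (flow isotopy $h(x)=Z^{\theta(x)}(x)$ fixes $\Lambda$, restricts to $M\setminus\Lambda$, is isotopic to the identity, hence preserves $[T]\in H^1(M\setminus\Lambda,\ZZ)$) is correct and would indeed separate the decompositions, provided the infinitely many primitive classes can be produced.

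However, there are two genuine gaps, and they are exactly where all the work lies. First, there is no off-the-shelf ``Fried-type classification adapted to this open setting'': Fried's and Schwartzman's theorems concern closed manifolds and rely on a first-return map, whereas $M\setminus\Lambda$ is open and the restricted flow is non-recurrent, with orbits asymptotic to $\Lambda$ in one or both time directions, so they may cross any given transverse torus only finitely often; the cone structure, the openness of the cone, and the connectedness of the surfaces realizing primitive classes all need to be re-established in this setting, and none of this is routine. Second, and more fatally, the step you yourself flag as the ``hard part''---arranging the construction so that this putative cone has dimension at least $2$---is left entirely to hope (``a homological tracking of the construction should confirm...''). Without it the scheme yields at most one fine plug decomposition, so the proposition is not proved. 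By contrast the paper avoids both difficulties by working with a completely explicit model in which the transverse tori are visibly products of curves with $\SS^1$, so that the non-equivalence reduces to an elementary statement about isotopy classes of simple closed curves in a punctured torus.
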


 We do not know if there exists Anosov vector fields which admit infinitely many fine plug decompositions that are pairwise not globally topologically equivalent. As a summary, the plug decomposition provided by Theorem~\ref{t.main} is:
\begin{itemize}
\item canonical from the viewpoint of residual equivalence,
\item ``almost canonical" from the viewpoint of piecewise topological equivalence,
\item not canonical  in general from the viewpoint of flow isotopy equivalence.
\end{itemize}
Roughly speaking, this means that~: the hyperbolic plugs provided by Theorem~\ref{t.main} are almost unique (finitely many possibilities) when we consider them intrinsically, but the positions of these plugs with respect of the orbits of $X$ in the closed manifold $M$ are far from being unique (infinitely many possibilities).

\bigskip

Theorem~\ref{t.main} is obviously reminiscent of Smale's decomposition theorems for axiom A vector fields. Recall that non-singular $C^1$ vector field $X$ on a three-manifold $M$ is said to be \emph{axiom~A} if its non-wandering set $\Omega(X)$ is hyperbolic and coincides with closure of the periodic points of $X$ (Anosov vector fields are examples of axiom~A vector fields). Under this hypothesis, $\Omega(X)$ can be decomposed, in a unique way, as the union of a finite number of pairwise disjoint basic sets $K_1,\dots,K_r$: this is the so-called \emph{spectral decomposition theorem}, and the sets $K_1,\dots,K_M$ are called the \emph{basic pieces} of $X$, see~\cite{Smale1967}. Moreover, using Lyapunov function, one can find a plug decomposition $(W_1,X),\dots,(W_r,X)$ of $(M,X)$ so that the maximal invariant set of $W_i$ is precisely the basic piece $K_i$. Moreover, in the particular case where $X$ is an Anosov vector field, the surfaces $S_1,\dots,S_n$ are necessarily tori, and Brunella has proved these tori $S_1,\dots,S_n$ are incompressible and pairwise non-parallel (\cite{Brunella1993}). So Theorem~\ref{t.main} is somehow parallel to Smale's decomposition of axiom A vector fields, where:
\begin{itemize}
\item the core of $X$ play the same role as the non-wandering set of an axiom A vector field;
\item the core basic sets $\Lambda_1,\dots,\Lambda_m$ provided by Theorem~\ref{t.main} play the same role as the basic pieces $K_1,\dots,K_r$ which appear in \emph{Smale's spectral decomposition} of the non-wandering set;
\item the fine plug decomposition $(V_1,X),\dots,(V_m,X)$ provided by Theorem~\ref{t.main} play the same role as the plug decomposition $(W_1,X),\dots,(W_r,X)$ defined above by mean of a Lyapunov function.
\end{itemize}
This is the reason we speak of ``spectral-like decomposition for Anosov vector fields". Notice nevertheless that the core basic sets $\Lambda_1,\dots,\Lambda_m$ provided by Theorem~\ref{t.main} are in general much smaller than the basic pieces of $X$ provided by Smale's decomposition theorem (see  e.g. by example~2, 3 and 5 above), and  a single basic piece of $X$ might contain several core basic sets (see e.g.~\cite[Theorem 1.5]{BeBoYu}).
Also notice that the proof of theorem~\ref{t.main} is completely different from those of Smale's spectral decomposition theorem for non-transitive axiom~A vector fields. Indeed, the starting point of the proof Smale's spectral decomposition theorem is the fact that the non-wandering set of an axiom A vector field is locally maximal. But we do not know \emph{a priori} that the core of an Anosov flow is a locally maximal set. Roughly speaking\footnote{Actually, the proof of theorem~\ref{t.main} is slightly more intricate than described below.}~:
\begin{itemize}[topsep=\parsep,itemsep=0cm]
\item[--] in the case of non-transitive axiom A flows, one first observes that the non-wandering set is locally maximal, hence gets a spectral decomposition, and therefore (using Lyapunov theory) gets some surfaces  $S_1,\dots,S_n$ which separates the basic pieces from each others,
\item[--] to obtain theorem~\ref{t.main}, we first have to prove that there exists a finite family of pairwise disjoint, pairwise non-parallel two-tori $\{T_1,\dots,T_n\}$ so that $\mathrm{Core}(X)$ is the maximal invariant set of $M-(T_1\cup\dots\cup T_i)$. This will imply $\mathrm{Core}(X)$ is a locally maximal set. Once we will know that, the decomposition in core basic sets will  follow by Smale's classical arguments.
\end{itemize}

\section{Existence of a fine plug decomposition}

The purpose of this section is to prove Theorem~\ref{t.main}.
All along this section, we consider a closed connected orientable three-dimensional manifold $M$, and an Anosov vector field $X$ on $M$. In the sequel, the phrase ``\textit{a transverse torus}" will always mean ``\textit{a two-dimensional torus embedded in $M$ and transverse to the vector field $X$}". The proof of theorem~\ref{t.main} uses five ingredients~:
\begin{enumerate}[topsep=\parsep,itemsep=0cm]
\item a cohomological argument which allows to find a \emph{finite} collection of transverse tori which intersect every periodic orbit in $M- \mathrm{Core}(X)$ (proposition~\ref{p.finite});
\item a desingularisation procedure which allow to replace any finite collection of transverse tori by a collection of \emph{pairwise disjoint} transverse tori $\cT'$ intersecting exactly the same orbits of $X$ as $\cT$ (corollary~\ref{c.disjoint});
\item the classical spectral decomposition theorem which tells us that $\Lambda_{\cT'}$ contains only finitely many chain recurrence classes, and that the periodic orbits are dense in each of these chain recurrence classes (theorem~\ref{t.basic-sets-decomposition});
\item Lyapunov theory which provides us with some transverse tori separating the basics sets of $\Lambda_{\cT'}$ (theorem~\ref{t.Lyapunov});
\item a lemma of Brunella which tells that two parallel transverse tori intersect the same orbits of $X$ (lemma~\ref{l.non-parallel}).
\end{enumerate}

We now proceed to the proof:

\begin{proposition}
\label{p.finite}
There exists a finite collection of transverse tori $\cT=\{T_1,\dots,T_n\}$ such that $T_1\cup\dots\cup T_n$ intersects every periodic orbit which is not in the core of $X$.
\end{proposition}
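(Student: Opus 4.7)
The plan is to run a cohomological argument that exploits the fact that all intersections of a periodic orbit with a transverse torus have the same sign. Orient each transverse torus $T$ by requiring that $X$ cross $T$ in the positive normal direction; this is well-defined because $M$ is orientable and $X$ is nowhere tangent to $T$. Then for any periodic orbit $\gamma$ (oriented by the flow), the algebraic intersection number $[\gamma]\cdot [T]\in\ZZ$ coincides with the geometric count of intersection points, since the flow pierces $T$ only in the positive direction. In particular, $\gamma\cap T\neq\emptyset$ if and only if $[\gamma]\cdot [T]>0$.

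Next I would form the subgroup $H\subseteq H_2(M;\ZZ)$ generated by the set $\{[T]:T\text{ is a transverse torus}\}$. Since $M$ is a closed orientable $3$-manifold, $H_2(M;\ZZ)$ is a finitely generated abelian group, so $H$ itself is finitely generated. Because the generating set for $H$ is precisely $\{[T]\}$, a finite subset of transverse tori $T_1,\dots,T_n$ already has the property that $[T_1],\dots,[T_n]$ generate $H$.

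To conclude, let $\gamma$ be a periodic orbit not contained in $\mathrm{Core}(X)$. By definition of the core, some point of $\gamma$ lies on some transverse torus $T$, so by the observation above $[\gamma]\cdot [T]>0$. Writing $[T]=\sum_{i=1}^n a_i[T_i]$ in $H$, we get
\[
0\neq [\gamma]\cdot [T]=\sum_{i=1}^n a_i\bigl([\gamma]\cdot [T_i]\bigr),
\]
hence $[\gamma]\cdot [T_i]\neq 0$ for some $i$. By the positivity principle of the first paragraph applied to $T_i$, this forces $\gamma\cap T_i\neq\emptyset$, which is what we needed.

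The only delicate point is the coherent orientation of \emph{all} transverse tori and the resulting identity between algebraic and geometric intersection with flow lines; everything else is routine $\ZZ$-linear algebra in the finitely generated group $H_2(M;\ZZ)$. I do not foresee a genuine obstacle here, but care is needed to notice that the finite generation of $H$ can be realized by classes coming from the set $\{[T]\}$ itself (and not merely by $\ZZ$-combinations thereof), which is what makes it legitimate to pick actual transverse tori $T_1,\dots,T_n$ as above.
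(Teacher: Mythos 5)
Your proof is correct and uses essentially the same ingredients as the paper: the observation that for a transverse torus the algebraic and geometric intersection numbers with a flow line agree (so nonvanishing of $[\gamma]\cdot[T]$ is equivalent to $\gamma\cap T\neq\emptyset$), combined with finite generation of the (co)homology of the closed manifold $M$. The paper runs the same argument by contradiction in $H^1(M;\QQ)$ (an infinite sequence of tori would yield linearly independent classes), whereas you phrase it constructively in $H_2(M;\ZZ)$ by extracting a finite generating subset of torus classes; this is a stylistic difference rather than a genuinely different route.
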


\begin{proof}[Proof of proposition~\ref{p.finite}]
We argue by contradiction: we assume that the conclusion of proposition~\ref{p.finite} does not hold. Using this assumption, a immediate induction allows us to construct a sequence $(T_k)_{k\geq 1}$ of transverse tori and a sequence $(\gamma_k)_{k\geq 2}$ of periodic orbits of $X$ with the following property~:
\begin{itemize}[topsep=\parsep,itemsep=0cm]
\item[(P)] For every $k\geq 2$, the orbit $\gamma_k$ is disjoint from the torus $T_i$ for $i=1,\dots,k-1$, but does intersect the torus $T_k$.
\end{itemize}
We will show that property (P) yields a contradiction. For this purpose, we will use the cohomology $H^*(M)=H^*(M,\QQ)$ of the manifold $M$. Consider an orbit $\gamma$ of $X$, and a two-torus $T$ embedded in $M$, transverse to the vector field $X$. By Poincare duality, $\gamma$ corresponds to a class $[\gamma] \in H^2(M)$, and  $T$ corresponds to a class $[T] \in H^1 (M)$. A standard theorem in (co-)homology theory tell us that the cup product of $[\gamma]$ and $[T]$ satisfies $[\gamma] \vee [T] = \mathrm{int}(\gamma,T) [\cdot]$, where $\mathrm{int}(\gamma,T)$ is the algebraic intersection number of $\gamma$ and $T$, and $[\cdot]$ is the Poincar\'e dual of the homology class of a point. Now, observe that, since $\gamma$ is an orbit of $X$, and  $T$ is transverse to $X$, the algebraic intersection number  $\mathrm{int}(\gamma,T)$ is nothing but the cardinal of $\gamma\cap T$. Therefore, $[\gamma] \vee [T]=0$ if and only if $\gamma\cap T=\emptyset$. This allows us to translate property~(P) as follows~:
\begin{itemize}[topsep=\parsep,itemsep=0cm]
\item[(P')] for every $k\geq 2$, one has $[\gamma_{k}]\vee [T_i]=0$ for $i=1,\dots,k-1$, but $[\gamma_k]\vee [T_k]\neq 0$.
\end{itemize}
In particular, for every $k\geq 0$, the class $[T_k]$ is not in the linear subspace of $H^1(M)$ spanned by $[T_1],...,[T_{k-1}]$. Therefore, $H^1(M)$ must be infinite dimensional. But this is impossible since $M$ is a closed manifold. This completes the proof of proposition~\ref{p.finite}.
\end{proof}

We will now explain how to replace the finite collection of transverse tori provided by proposition~\ref{p.finite} by a finite collection of \emph{pairwise disjoint} transverse tori~:

\begin{proposition}
\label{p.disjoint}
Given any finite collection $\cS=\{S_1,\dots,S_k\}$ of transverse tori such that $S_1,\dots,S_{k-1}$ are pairwise disjoint, one can find another finite collection $\widehat S=\{\widehat S_1,\dots,\widehat S_\ell\}$ of transverse tori such that:
\begin{itemize}[topsep=\parsep,itemsep=0cm]
\item $\widehat S_1,\dots,\widehat S_\ell$ are pairwise disjoint;
\item $\widehat S:=\widehat S_1\cup\dots\cup\widehat S_\ell$ intersects exactly the same orbits of $X$ as $S:=S_1\cup \dots\cup S_k$.
\end{itemize}
\end{proposition}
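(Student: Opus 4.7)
My plan is to prove a slightly stronger statement by induction on the number $N$ of connected components of the singular set $\bigcup_{S\ne S'\in\cS}S\cap S'$: \emph{any finite collection $\cS$ of transverse tori can be replaced by a pairwise disjoint finite collection of transverse tori intersecting exactly the same orbits of $X$.} The hypothesis of Proposition~\ref{p.disjoint} forces this singular set to lie in $S_k\cap(S_1\cup\cdots\cup S_{k-1})$, hence to be a finite union of circles, so $N$ is finite. The base case $N=0$ is the conclusion itself, so it suffices to produce, for $N\geq 1$, a collection $\widetilde\cS$ of transverse tori intersecting the same orbits as $\cS$ and with strictly fewer singular circles.

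For the inductive step I pick any circle $c\subset S\cap S'$ in the singular set and perform a \emph{local flow-box surgery}. Choose a small tubular neighborhood $U$ of $c$ that is disjoint from every torus in $\cS\setminus\{S,S'\}$, and equip $U$ with flow-box coordinates $(s,u,t)$ such that $X=\partial_t$ and $c=\{u=0,t=0\}$; then $S\cap U$ and $S'\cap U$ are graphs $\{t=f(s,u)\}$ and $\{t=g(s,u)\}$ with $f-g$ vanishing transversely along $\{u=0\}$. I replace the two graphs inside $U$ by two smooth graphs $\widehat S_\pm=\{t=m_\pm(s,u)\}$, where $m_-<m_+$ pointwise and $m_\pm$ coincide with $\min(f,g)$ and $\max(f,g)$ near $\partial U$: concretely, start from the mollified min/max $\tfrac12(f+g)\pm\tfrac12\sqrt{(f-g)^2+\eta^2}$ and smoothly cut off to the genuine $\min,\max$ outside a thin neighborhood of $\{u=0\}$ (legitimate because $|f-g|$ is bounded below away from $\{u=0\}$ by the transverse vanishing). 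Being graphs over $(s,u)$ the two new surfaces are automatically transverse to $X$; they are disjoint because $m_-<m_+$; and they glue smoothly to the untouched parts of $S$ and $S'$ outside $U$, since near $\partial U$ each $\widehat S_\pm$ literally equals $f$ on one side of $c$ and $g$ on the other (determined by the sign of $f-g$, which is fixed by the transverse vanishing).

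Three verifications close the induction. (i) \emph{Same orbits}: outside $U$ nothing changes, and inside $U$ both $S\cup S'$ and $\widehat S_+\cup\widehat S_-$ meet each flow line $\{(s,u)\}\times\RR$ in exactly two points (counted with multiplicity), so they intersect the same orbits. (ii) The resulting closed surface $\widetilde\Sigma$ is orientable and transverse to $X$, hence by the footnote in the introduction each connected component carries a non-singular foliation induced by the weak-stable foliation and is therefore a torus; let $\widetilde\cS$ be these tori together with the unchanged $S''\in\cS\setminus\{S,S'\}$. (iii) \emph{Reduced singular count}: the singular locus of $\widetilde\cS$ is obtained from that of $\cS$ by removing $c$ only, since $U$ meets no other torus and $\widehat S_+\cap\widehat S_-=\emptyset$, while all other singular circles persist; hence $\widetilde\cS$ has singular count $N-1$ and the inductive hypothesis applied to $\widetilde\cS$ yields the desired pairwise disjoint collection. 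The one delicate point is the smoothing step, which must simultaneously preserve transversality to $X$, produce two disjoint surfaces, and glue smoothly on $\partial U$; resolving the crossing along the flow direction (keeping both new surfaces as graphs in $t$) is precisely what enforces all three conditions at once.
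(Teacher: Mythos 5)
Your local flow-box surgery is essentially the same construction the paper uses (the paper picks any two disjoint graphs $A$, $A'$ agreeing with the old surfaces near $\partial U$; your explicit mollified $\min/\max$ is a clean concrete realization, and the verifications of transversality, disjointness, smooth gluing and same-orbits are all sound). The problem is in how you organize the surgeries, not in the surgery itself.

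You prove the stronger statement (any finite collection of transverse tori can be disjointified) by induction on the number $N$ of intersection circles, resolving one circle $c\subset S\cap S'$ at a time and invoking the inductive hypothesis on the resulting collection $\widetilde\cS$. The gap is in step~(ii)--(iii): when $S$ and $S'$ meet along more than one circle, resolving only $c$ can produce a \emph{connected, immersed but non-embedded} surface. Concretely, the ``crosswise'' regluing along $c$ can join a piece of $S$ containing another intersection circle $c'$ to a piece of $S'$ also containing $c'$, so the resulting closed surface passes through $c'$ twice and is therefore self-intersecting; its image in $M$ is not an embedded torus. Then $\widetilde\cS$ is not a collection of ``transverse tori'' in the sense of the paper (which means \emph{embedded} tori transverse to $X$ --- see the convention at the start of Section~2), the singular set of $\widetilde\cS$ is no longer literally ``$\bigcup_{S\ne S'}S\cap S'$'' but involves self-intersections, and the inductive hypothesis simply does not apply. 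Your proof never rules this out, and there is no reason it cannot happen.

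The paper sidesteps this entirely: the hypothesis that $S_1,\dots,S_{k-1}$ are pairwise disjoint forces every intersection circle to lie on $S_k$, and one performs the surgery on \emph{all} of these circles simultaneously. After that there are no double points left, so the result is automatically an embedded closed surface, and its components are embedded transverse tori. The general case (Corollary~\ref{c.disjoint}) is then obtained by iterating the proposition, adding one torus at a time to an already-disjoint family --- which is exactly the role of the hypothesis you tried to drop. To repair your argument, either resolve all circles of a fixed pair $S\cap S'$ (or, in the setting of the proposition, all circles on $S_k$) in one step, or enlarge the inductive class to immersed transverse surfaces with transverse double curves and redo the surgery analysis there; as written, the induction does not close.
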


The proof of this proposition consists in performing some simple surgeries to eliminate on the immersed surface $S=S_1\cup \dots\cup S_k$.

\begin{proof}[Proof of proposition~\ref{p.disjoint}]
 Let us first observe that a small perturbation of the embedding of $S_k$ does not change the set of orbits of $X$ that are intersected by this torus. Therefore, we can assume that $S_k$ is transverse to $S_1,\dots,S_{k-1}$. It follows that $S_k\cap (S_1\cup\dots\cup S_{k-1})$ is a finite collection of pairwise disjoint simple closed curves.

Let $C$ be one of these curves; $C$ is a connected component component of $S_i\cap S_k$ for some $i\in\{1,\dots,k-1\}$. Since $X$ is transverse to both $S_i$ and $S_n$, we can find a neighborhood $U$ of $C$ in $M$, and a diffeomorphism $h:U\to \SS^1\times [-1,1]\times [-1,1]$ such that:
\begin{itemize}[topsep=\parsep,itemsep=0cm]
\item the surface $h(S_i)$ (resp. $h(S_k)$) is a graph over the ``horizontal" annulus $\SS^1\times [-1,1]\times \{0\}$;
\item the vector field $h_*X$ is ``vertical": the flow lines of $h_*X$ are the vertical lines $\{\star\}\times\{\star\}\times [-1,1]$
\end{itemize}
(see figure~\ref{f.surgery}, left). Then we can find two smooth graphs $A,A'\subset U$ over the annulus $\SS^1\times [-1,1]\times \{0\}$ such that:
\begin{itemize}[topsep=\parsep,itemsep=0cm]
\item $A$ is disjoint from $A'$ (say $A'$ is strictly below $A$);
\item $A\cup A'$ coincides with $h(S_i)\cup h(S_k)$ on a neighborhood of the boundary of $\SS^1\times [-1,1]\times \{0\}$.
\end{itemize}
(see the figure below rightwards). Note that $h^{-1}(A)$ (resp. $h^{-1}(A')$) is transverse to $X$, since $A$ (resp. $A'$) is a graph over the horizontal annulus $\SS^1\times [-1,1]\times \{0\}$ and $X$ is vertical. Also note that $h^{-1}(A\cup h^{-1}(A')$ intersects exactly the same orbits of $X$ as $(S_i\cup S_k)\cap U$ (because every horizontal graph in over the horizontal annulus $\SS^1\times [-1,1]\times \{0\}$ intersects every vertical line $\{\star\}\times\{\star\}\times [-1,1]$ in $\SS^1\times [-1,1]\times [-1,1]$).

We do a surgery on $S=S_1\cup\dots\cup S_k$: we replace  $(S_i\cup S_k)\cap U$ by  $h^{-1}(A\cup h^{-1}(A')$. Actually, we do this surgery not only for the curve $C$, but for each connected component of $S_k\cap (S_1\cup\dots\cup S_{k-1})$. This surgery changes $S=S_1\cup\dots\dots S_k$ into a (not necessarily connected) closed surface $\widehat S$ which is embedded in $M$ and transverse to $X$. This surface $\widehat S$ intersects exactly the same orbits of $X$ as $S$. The connected component of $\widehat S$ are transverse tori, since every connected closed surface transverse to an Anosov vector field is a torus. We set $\widehat{\cS}=\{\widehat S_1,\dots,\widehat S_{\hat\ell}\}$ to be the collection of the connected components of $\widehat S$.
\end{proof}

\begin{figure}[h]
\centerline{\includegraphics[totalheight=5.5cm]{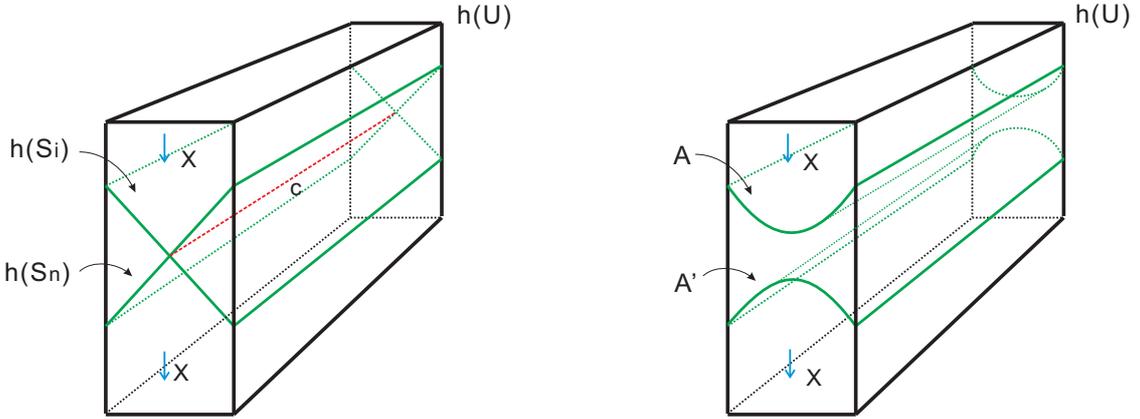}}
\caption{\label{f.surgery}Removing intersection curves in a union of transverse tori}
\end{figure}

\begin{corollary}
\label{c.disjoint}
Given any finite collection of transverse tori $\cT$, there exists another finite collection of transverse tori $\cT'$ such that:
\begin{itemize}[topsep=\parsep,itemsep=0cm]
\item the elements of $\cT'$ are pairwise disjoint;
\item the union of the elements of $\cT'$ intersects exactly the same orbits of $X$ as the union of the elements of $\cT$.
\end{itemize}
\end{corollary}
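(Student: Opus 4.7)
The plan is to derive Corollary~\ref{c.disjoint} from Proposition~\ref{p.disjoint} by a straightforward induction on the number $n$ of tori in the collection $\cT = \{T_1, \dots, T_n\}$. The base case $n \leq 1$ is trivial: a single torus is already a pairwise disjoint collection, and the empty collection even more so.

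For the inductive step, I would assume that for every collection of $k$ transverse tori one can produce a pairwise disjoint collection of transverse tori intersecting exactly the same orbits. Given $\cT = \{T_1, \dots, T_{k+1}\}$, apply the induction hypothesis to $\{T_1, \dots, T_k\}$ to obtain a finite pairwise disjoint collection $\{T_1', \dots, T_\ell'\}$ of transverse tori whose union meets exactly the same orbits of $X$ as $T_1 \cup \dots \cup T_k$. Now form the collection
\[
\cS = \{T_1', \dots, T_\ell', T_{k+1}\},
\]
whose first $\ell$ members are pairwise disjoint. This is precisely the setting of Proposition~\ref{p.disjoint} (with the final torus $T_{k+1}$ playing the role of $S_k$), so it supplies a pairwise disjoint collection $\widehat{\cS} = \{\widehat S_1, \dots, \widehat S_{\hat\ell}\}$ of transverse tori whose union intersects the same orbits of $X$ as $\cS$.

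It remains to observe that the orbits of $X$ meeting $T_1' \cup \dots \cup T_\ell' \cup T_{k+1}$ are exactly the orbits meeting $T_1 \cup \dots \cup T_{k+1}$, by the inductive choice of the $T_i'$. Hence $\widehat{\cS}$ has the required two properties with respect to the original collection $\cT$, which closes the induction and yields the corollary. There is essentially no obstacle here: all of the geometric work — the surgery along the intersection curves of two transverse tori — has already been carried out in Proposition~\ref{p.disjoint}, and the corollary is merely its iterated form.
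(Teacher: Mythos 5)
Your proof is correct and is essentially the same as the paper's: both arguments iterate Proposition~\ref{p.disjoint} one torus at a time, repeatedly absorbing the next torus of $\cT$ into an already pairwise disjoint collection while preserving the set of intersected orbits. The only cosmetic difference is that you phrase the iteration as a formal induction on $n$, whereas the paper writes it out as a finite chain of applications followed by ``\textit{Etc.}''.
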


\begin{proof}
Denote by $T_1,\dots,T_n$ the elements of $\cT$. The proof consists in applying $n-1$ times proposition~\ref{p.disjoint}. First, we consider the collection of transverse tori $\cS^1:=\{T_1,T_2\}$. Proposition~\ref{p.disjoint} provides a collection of pairwise disjoint transverse tori $\widehat{\cS^1}$, such that the union of the elements of $\widehat{\cS^1}$ intersect exactly the same orbits of $X$ as the union of the elements of $\cS^1$, \emph{i.e.} the same orbits of $X$ as $T_1\cup T_2$. Then we consider the collection of transverse tori $\cS^2:=\widehat{\cS^1}\cup\{T_3\}$. Proposition~\ref{p.disjoint} provides a collection of pairwise disjoint tori $\widehat{\cS^2}$, such that the union of the elements of $\widehat{\cS^2}$ intersect the same orbits of $X$ as the union of the elements of $\cS^2$, \emph{i.e.} the same  orbits of $X$ as $T_1\cup T_2\cup T_3$. \textit{Etc.}
\end{proof}

Let us now recall Conley's fundamental theorem of dynamical systems (see \emph{e.g.} \cite[Theorem 3.14]{Shub1978}):

\begin{theorem}[Conley]
\label{t.Lyapunov}
Let $N$ be a compact manifold with boundary, and $Y$ be a vector field on $N$, transverse to $\partial N$. Let $\partial^{in}N$ (resp. $\partial^{out} N$) be the part of $\partial N$ where $Y$ is pointing inwards (resp. outwards) $N$. Then one can find a smooth function $f:N\to [-1,1]$ such that:
\begin{enumerate}[topsep=\parsep,itemsep=0cm]
\item $\partial^{in} N=f^{-1}(\{1\})$ and $\partial^{out} N=f^{-1}(\{1\})$;
\item every chain-recurrence class of $Y$  is contained in a (singular) level set of $f$;
\item any two distinct chain-recurrence classes of $Y$ are contained in distinct level sets of $f$;
\item $df_x.Y<0$ for every point $x$ which is not in the chain-recurrent set of $Y$ (in particular, $f$ is decreasing along any orbit of $Y$ which is not in the chain recurrent set of $Y$).
\end{enumerate}
\end{theorem}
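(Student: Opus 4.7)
\medskip

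\noindent\textbf{Proof plan for Theorem~\ref{t.Lyapunov} (Conley).} The plan is to follow Conley's classical construction via attractor-repeller pairs, with a small modification at the end to enforce the boundary conditions $f(\partial^{in}N)=1$, $f(\partial^{out}N)=-1$. First I would set up the chain-recurrent set $\cR(Y)$ of the flow of $Y$ (relativized to $N$: orbits are allowed only until they exit through $\partial^{out}N$, and one includes all points remaining in $N$ for all times), and recall that $\cR(Y)$ decomposes into equivalence classes (chain-recurrence classes) under the relation ``$x\sim y$ iff there is an $\varepsilon$-chain from $x$ to $y$ and one from $y$ to $x$, for every $\varepsilon>0$.''

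The core of the argument is the attractor-repeller duality. An \emph{attractor} is a compact invariant set $A\subset \mathrm{int}(N)$ admitting an \emph{attractor block} $U$ (a compact neighborhood with $Y$ inward-pointing on $\partial U$) with $A=\bigcap_{t\ge 0}Y^t(U)$; its \emph{dual repeller} is $A^*:=\{x\in N:\omega(x)\cap A=\varnothing\}$, where by convention $\omega(x)=\varnothing$ if the forward orbit leaves through $\partial^{out}N$. The first key lemma to establish is that $\cR(Y)=\bigcap_{A}\bigl(A\cup A^*\bigr)$, the intersection ranging over all attractors of $Y$ in $N$; this is Conley's identification of the chain-recurrent set. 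The second key fact is that the set of attractor blocks, considered up to the equivalence ``same attractor,'' contains only countably many distinct pairs $(A_n,A_n^*)$: using a countable basis of $N$, one selects a countable family of blocks realizing all distinct attractors.

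For each pair $(A_n,A_n^*)$ I would build an elementary continuous Lyapunov function $g_n:N\to[0,1]$ with $g_n^{-1}(0)=A_n$, $g_n^{-1}(1)=A_n^*$, and $g_n$ strictly decreasing along every orbit segment in $N\setminus(A_n\cup A_n^*)$. The standard recipe: start from $h_n(x):=d(x,A_n)/(d(x,A_n)+d(x,A_n^*))$, which already vanishes exactly on $A_n$ and equals $1$ exactly on $A_n^*$; then replace $h_n$ by $g_n(x):=\sup_{t\ge 0}h_n(Y^t x)$ (with the convention that once an orbit leaves $N$ through $\partial^{out}N$ the supremum ignores subsequent times), which makes $g_n$ non-increasing along orbits, and finally strictly decrease it off $A_n\cup A_n^*$ by a standard time-averaging trick $\tilde g_n(x)=\int_0^\infty e^{-s} g_n(Y^s x)\,ds$. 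Setting $F:=\sum_{n\ge 1}2^{-n}\tilde g_n$ gives a continuous function that is constant on each chain-recurrence class, separates any two distinct classes (because some attractor $A_n$ ``sees'' only one of them), and is strictly decreasing along every orbit in $N\setminus\cR(Y)$.

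Finally I would enforce the boundary conditions and smoothness. Because $Y$ is transverse to $\partial N$, the exit-time function $\tau^+(x)$ to $\partial^{out}N$ (resp.\ entry-time $\tau^-(x)$ from $\partial^{in}N$) is smooth on $N\setminus\cR(Y)$; combining $F$ with an affine function of these times lets me rescale and shift so that the new function takes value $1$ on $\partial^{in}N$ and $-1$ on $\partial^{out}N$ while retaining properties (2)--(4). For smoothness, I would approximate the continuous function by a smooth one through the standard trick of convolving with a mollifier that is supported in flow boxes and using a partition of unity subordinate to such flow boxes: outside $\cR(Y)$ the derivative along $Y$ is strictly negative and open, so a $C^0$-small smooth perturbation preserves $df\cdot Y<0$ there and leaves the level-set structure on $\cR(Y)$ intact (after a final minor adjustment on a tubular neighborhood of $\cR(Y)$ to keep $f$ exactly constant on each class). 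The main obstacle, as in Conley's original proof, is the second key lemma $\cR(Y)=\bigcap_A(A\cup A^*)$: the inclusion $\supset$ is easy, but $\subset$ requires showing that if $x\notin\cR(Y)$ then one can build an attractor block separating the forward orbit of $x$ from its backward orbit, which is the technical heart of the argument.
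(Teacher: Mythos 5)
The paper does not prove this theorem: it is explicitly quoted as Conley's fundamental theorem of dynamical systems and cited to \cite[Theorem~3.14]{Shub1978}, so there is no ``paper's own proof'' to compare against. Your sketch is the standard attractor--repeller construction of Conley, which is indeed the canonical route (attractor blocks, $\cR(Y)=\bigcap_A(A\cup A^*)$, elementary Lyapunov functions $g_n$ for a countable family of attractor--repeller pairs, weighted sum, smoothing), and the overall architecture is correct.

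One genuine gap worth fixing: the weighted sum $F=\sum_n 2^{-n}\tilde g_n$ does \emph{not} automatically separate distinct chain-recurrence classes. Each $\tilde g_n$ is constant, equal to $0$ or $1$, on every chain class, so the value of $F$ on a class is a binary expansion $\sum_{n\in S}2^{-n}$ where $S=\{n: \text{class}\subset A_n^*\}$. Binary expansions are not unique ($\sum_{n\ge 2}2^{-n}=2^{-1}$), so two classes with different ``addresses'' $S_1\ne S_2$ can give the same $F$-value; your parenthetical ``some $A_n$ sees only one of them'' shows $S_1\ne S_2$ but not $F(C_1)\ne F(C_2)$. The standard fix is to use weights $2\cdot 3^{-n}$ (or any base $\ge 3$), so that the value of $F$ on each class is a point of the middle-thirds Cantor set, and the addressing map $S\mapsto\sum_{n\in S}2\cdot 3^{-n}$ is injective. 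The rest of the sketch (strict decrease of $\tilde g_n$ off $A_n\cup A_n^*$ via $\omega(x)\subset A_n$, relative flow cut off at $\partial^{out}N$, adjusting with the smooth exit/entry time functions to enforce $f=\pm 1$ on the boundary, then smoothing by mollification in flow boxes with a final correction near $\cR(Y)$) is plausible, though each step needs the usual care about continuity of $\sup_{t}h_n(Y^tx)$ and about keeping the level sets of the smoothed function exactly the chain classes. Note also that item~1 of the statement as printed in the paper contains a typo (it should read $\partial^{out}N=f^{-1}(\{-1\})$), which you have read correctly.
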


The function $f$ is called \emph{a complete smooth Lyapunov function for the vector field $Y$}. We also recall Smale's spectral decomposition theorem for locally maximal hyperbolic sets (see \emph{e.g.}~\cite{Shub1978}):

\begin{theorem}[Smale]
\label{t.basic-sets-decomposition}
Let $N$ be a compact manifold (with or without boundary), $Y$ be a vector field on $N$ transverse to the boundary of $N$, and $\Lambda$ be a compact invariant locally maximal hyperbolic set for $Y$. Then $Y_{|\Lambda}$ has finitely many chain-recurrent classes, called \emph{basic pieces}. Each basic piece is a \emph{basic set}, 
\emph{i.e} is transitive, locally maximal, contains a dense set of periodic orbits.
\end{theorem}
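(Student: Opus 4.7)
The plan is to combine the three fundamental tools available for locally maximal hyperbolic sets: the Anosov closing lemma, the shadowing lemma, and the local product structure induced by the hyperbolic splitting $TN_{|\Lambda}=E^s\oplus\RR Y\oplus E^u$. Everything will ultimately follow from the fact that, on a locally maximal hyperbolic set, chain-recurrence classes of $Y_{|\Lambda}$ are simultaneously closed (trivially) and open (nontrivially, via the product structure).

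First, I would show that periodic orbits are dense in the non-wandering set $\Omega(Y_{|\Lambda})$, and that this non-wandering set coincides with the chain-recurrent set. The argument is classical: given $x\in\Omega(Y_{|\Lambda})$ and $\epsilon>0$, recurrence produces an orbit segment returning $\epsilon$-close to $x$, which one closes up into an $\epsilon$-pseudo-orbit lying in $\Lambda$; shadowing inside the locally maximal hyperbolic set $\Lambda$ then yields a true periodic orbit close to $x$ (Anosov closing). The same pseudo-orbit trick, applied now to arbitrary $\epsilon$-chains, gives chain-recurrence $=$ non-wandering inside $\Lambda$.

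Next, I would introduce the equivalence relation of \emph{homoclinic relatedness} on periodic orbits of $\Lambda$: two periodic orbits $\gamma_1,\gamma_2$ are related if $W^u(\gamma_1)$ meets $W^s(\gamma_2)$ transversely in $\Lambda$, and vice-versa. Using the inclination lemma together with the uniform local product structure of $\Lambda$, each equivalence class is open in the set of periodic orbits; taking closures yields closed $Y$-invariant sets $\Lambda_1,\dots,\Lambda_k$ which together exhaust the chain-recurrent set. Compactness of $\Lambda$, combined with the uniform scale at which local product structure works, forces $k$ to be finite. Transitivity of each $\Lambda_i$ follows by the classical homoclinic-shadowing argument: the orbit of any transverse homoclinic intersection point is dense in its homoclinic class, and density of periodic orbits in $\Lambda_i$ is immediate from the construction.

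Finally, local maximality of each $\Lambda_i$ follows from the pairwise disjointness and compactness of the $\Lambda_i$: choose small pairwise disjoint open neighborhoods $U_i\supset\Lambda_i$ inside a local-maximality neighborhood $U$ of $\Lambda$ itself. Any $Y$-invariant set contained in $\overline{U_i}$ lies in $\Lambda\cap\overline{U_i}$, and by disjointness of the chain-recurrence classes together with invariance, this intersection equals $\Lambda_i$. The main obstacle in this whole program is the openness step — establishing that homoclinic relatedness is an open equivalence relation on periodic orbits, equivalently that chain-recurrence classes of $Y_{|\Lambda}$ are clopen — because this simultaneously underlies finiteness, transitivity and local maximality. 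This openness is exactly where uniform hyperbolicity is indispensable, through the local product structure that allows one to ``slide'' stable and unstable manifolds across a product neighborhood without losing transverse intersections.
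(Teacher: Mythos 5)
The paper does not prove this statement: it is quoted as Smale's classical spectral decomposition theorem and the reader is simply referred to Shub's book \cite{Shub1978} (ultimately \cite{Smale1967}). Your outline is a faithful reconstruction of the standard argument: Anosov closing and shadowing give density of periodic orbits and the coincidence of the non-wandering and chain-recurrent sets inside $\Lambda$; the equivalence relation of homoclinic relatedness, together with the uniform scale of the local product structure and the inclination lemma, yields finitely many clopen classes whose closures are the basic pieces; transitivity and density of periodic orbits inside each piece follow from the usual homoclinic--shadowing argument. This is the same route as in the references the paper cites.

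One genuine slip should be flagged, in the local maximality step. The assertion that ``$\Lambda\cap\overline{U_i}$ equals $\Lambda_i$'' is false as stated: if $\Lambda$ contains a heteroclinic orbit from $\Lambda_j$ to $\Lambda_i$, its forward tail lies in $\Lambda\cap\overline{U_i}$ but not in $\Lambda_i$. What you actually need, and what is true, is that the \emph{maximal invariant set} of $\overline{U_i}$ equals $\Lambda_i$. This requires an extra step: if the full orbit of $x$ stays in $\overline{U_i}$, then $x\in\Lambda$ (local maximality of $\Lambda$) and $\alpha(x),\omega(x)\subset\Lambda_i$; since $\Lambda_i$ is chain transitive one may concatenate a piece of the forward orbit of $x$ reaching near $\omega(x)$, an $\epsilon$-chain inside $\Lambda_i$ from $\omega(x)$ to $\alpha(x)$, and a piece of the orbit from near $\alpha(x)$ back to $x$, showing that $x$ is chain recurrent in $\Lambda$ and hence lies in $\Lambda_i$. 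With this correction, the argument is complete and matches the classical proof.
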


To complete the proof of theorem~\ref{t.main}, we shall need the following lemma, which was proved by Brunella in~\cite{Brunella1993}:

\begin{lemma}[Brunella]
\label{l.non-parallel} Let $X$ be an Anosov vector field on a $3$-manifold $M$. 
If $N$ is a sub-manifold with boundary of $M$, which is diffeomorphic to $\TT^2\times [0,1]$, and whose boundary is transverse to $X$, then the maximal invariant set $\bigcap_{t\in\RR} X^t(N)$ is empty.
\end{lemma}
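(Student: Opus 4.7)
The plan is to argue by contradiction. Suppose $\Lambda := \bigcap_{t\in\RR}X^t(N) \neq \emptyset$. Since $\Lambda \subset M$ inherits the Anosov splitting of $X$, it is automatically a hyperbolic invariant set, and it is tautologically locally maximal in $N$. Smale's spectral decomposition (Theorem~\ref{t.basic-sets-decomposition}) then produces at least one basic set inside $\Lambda$, and in particular a closed orbit $\gamma \subset \mathrm{int}(N)$.

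I would then pin down the free-homotopy class of $\gamma$ using two classical facts: (a) every closed orbit of an Anosov flow on a closed three-manifold represents a non-trivial conjugacy class in $\pi_1(M)$; and (b) every two-torus transverse to an Anosov flow is incompressible in $M$ (Brunella~\cite{Brunella1993}, Fenley~\cite{Fenley1995}). Applying (b) to $T_0$, which is a deformation retract of $N$, the inclusion induces an injection $\pi_1(N) = \ZZ^2 \hookrightarrow \pi_1(M)$; then (a) forces $[\gamma]\in \pi_1(N)$ to be non-trivial. Consequently $\gamma$ is freely homotopic \emph{inside} $N$ to an essential simple closed curve $c$ on the boundary torus $T_0$, and this free homotopy can be realized by an embedded annulus $A \subset N$ with $\partial A = \gamma \sqcup c$.

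The main obstacle is to turn this topological configuration into an outright contradiction with the transversality of $T_0$ to $X$. My first attempt would be a Poincar\'e--Hopf count on $A$: after a generic perturbation of $A$ (rel $\partial A$), the projection $X_A$ of $X$ onto $TA$ is a tangent vector field on $A$ with only isolated zeros in the interior (precisely where $X$ is normal to $A$), and its local indices at these zeros are controlled by the hyperbolic splitting of $X$. Along the $\gamma$-boundary, $X_A = X$ is nowhere vanishing and tangent to $\partial A$ (since $\gamma$ is an orbit). Along the $c$-boundary, the transversality of $X$ to $T_0$ combined with the transversality of $A$ to $T_0$ forces the component of $X$ normal to $A$ to have a constant sign along $c$, so that $X_A$ meets $c$ in a prescribed way. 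The Poincar\'e--Hopf formula for tangent vector fields on a surface with boundary should then relate $\chi(A) = 0$ to the sum of interior indices plus a signed boundary contribution dictated by the transversality of $T_0$; this should be inconsistent.

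A more conceptual back-up, in case the direct count proves too delicate, is to pass to the universal cover $\widetilde N \cong \RR^2 \times [0,1]$: the lift $\tilde\gamma$ is a properly embedded orbit of $\tilde X$ invariant under a non-trivial horizontal deck translation $\tau:(x,t)\mapsto(x+v,t)$, while the boundary planes $\RR^2 \times \{0,1\}$ are transverse to $\tilde X$. Using the geometric description of Anosov flows in the universal cover developed by Barbot and Fenley (properness of orbits and the planar structure of weak stable/unstable leaves), together with the $\tau$-equivariance of $\tilde\gamma$, one should be able to show that $\tilde\gamma$ must eventually cross one of the boundary planes, contradicting $\tilde\gamma \subset \widetilde N \setminus \partial \widetilde N$.
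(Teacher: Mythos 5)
Your proposal does not follow the route the paper has in mind. The paper does not reprove this lemma: it explicitly defers to Brunella~\cite{Brunella1993} and remarks that the proof, while short, ``relies on rather sophisticated tools (a version of Novikov's theorem due to Plante).'' Brunella's argument goes through the theory of codimension-one foliations: one looks at how the weak stable and unstable foliations of the Anosov flow trace out one-dimensional foliations on the boundary tori, and uses the Novikov--Plante theorem (absence of Reeb components for these induced foliations, coming from tautness of the Anosov foliations and incompressibility of the transverse tori) to force every orbit entering $N$ to exit, so that the maximal invariant set is empty. You instead try to reduce to a Poincar\'e--Hopf count on an annulus or to an equivariance argument in the universal cover; both are genuinely different strategies.

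The problem is that neither of your two strategies is actually carried out, and both are likely to fail as stated. The Poincar\'e--Hopf plan stalls precisely at the step you flag with ``should be inconsistent'': you never compute the interior indices, never pin down the sign of the boundary contribution along $c$, and never explain why the total cannot be $\chi(A)=0$. Worse, this count alone \emph{cannot} give the contradiction: one can build a non-Anosov vector field on $\TT^2\times[0,1]$, transverse to the boundary, with an interior periodic orbit (e.g.\ a linear flow on a middle torus $\TT^2\times\{1/2\}$, slowly turned transverse near the boundary); there the same annulus and the same index bookkeeping must be consistent. So the hyperbolic splitting has to enter the argument in an essential, quantitative way, and your proposal only gestures at this (``indices controlled by the hyperbolic splitting''). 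There is also a smaller issue earlier: realizing the free homotopy from $\gamma$ to a \emph{simple} closed curve on $T_0$ by an \emph{embedded} annulus requires $[\gamma]$ to be primitive in $\pi_1(N)\cong\ZZ^2$, which you do not justify (an embedded loop in $\TT^2\times[0,1]$ may represent a non-primitive class; for the count one can work with an immersed annulus, but then the boundary transversality discussion along $c$ needs to be reworked). Finally, the back-up argument via the universal cover rests on ``one should be able to show that $\tilde\gamma$ must eventually cross one of the boundary planes,'' which is exactly the content of the lemma in disguise and is left unproved. As it stands, the proposal is an outline of two possible attacks, neither of which is closed, and neither of which matches the foliation-theoretic argument the paper is leaning on.
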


The proof of this lemma is quite short, but relies on rather sophisticated tools (a version of Novikov's theorem due to Plante). The converse of the above lemma is also true (and more easier to prove):

\begin{lemma}
\label{l.parallel} Let $X$ be an Anosov vector field on an orientable $3$-manifold $M$.
Let $N$ be a non-empty connected sub-manifold with boundary of $M$, so that the boundary of $N$ is transverse to $X$. 
If the maximal invariant set $\Lambda:=\bigcap_{t} X^t(N)$ is empty, then $N$ is diffeomorphic to $\TT^2\times [0,1]$, and $X_{|N}$ is topologically equivalent to vertical vector field $\frac{\partial}{\partial t}$ on $\TT^2\times [0,1]$.
\end{lemma}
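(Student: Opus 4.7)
The plan is to show that the hypothesis $\Lambda=\emptyset$ forces every orbit of $X$ to cross $N$ in bounded forward and backward time, thereby realizing $N$ as a flow box over its entry boundary.

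\emph{Step 1: every orbit leaves $N$.} Consider the forward-trapped set $\Omega^+:=\{x\in N : X^t(x)\in N \text{ for every } t\geq 0\}$. If some $x\in\Omega^+$ existed, then the $\omega$-limit set $\omega(x)$ would be a non-empty compact subset of $N$ (the forward orbit lies in the closed set $N$) and would be invariant under the full flow of $X$; hence $\omega(x)\subseteq\Lambda$, contradicting the hypothesis. Applying the same argument to $-X$ shows the backward-trapped set $\Omega^-$ is empty as well. In particular both $\partial^{in}N$ and $\partial^{out}N$ are non-empty, and for every $x\in N$ the connected component of $\{t\in\RR : X^t(x)\in N\}$ containing $0$ is a bounded interval $[\tau_-(x),\tau_+(x)]$. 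Transversality of $X$ to $\partial N$ ensures that $\tau_\pm$ are smooth, and that $X^{\tau_-(x)}(x)\in\partial^{in}N$ while $X^{\tau_+(x)}(x)\in\partial^{out}N$; compactness of $N$ then makes $\tau_\pm$ uniformly bounded.

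\emph{Step 2: flow-box parameterization.} Set $T(y):=\tau_+(y)$ for $y\in\partial^{in}N$ and define $\Phi:\partial^{in}N\times[0,1]\to N$ by $\Phi(y,s):=X^{sT(y)}(y)$. Surjectivity follows from Step~1: every $x\in N$ is of the form $X^{-\tau_-(x)}(y)$ with $y:=X^{\tau_-(x)}(x)\in\partial^{in}N$ and $-\tau_-(x)\in[0,T(y)]$. Injectivity follows from transversality, which forces any orbit segment contained in $N$ to meet $\partial N$ exactly at its two endpoints, so that the entry point $y\in\partial^{in}N$ is uniquely determined by $x$. Hence $\Phi$ is a diffeomorphism, and by construction $\Phi^{-1}$ sends the oriented orbits of $X|_N$ onto the oriented vertical segments $\{y\}\times[0,1]$, which is precisely the desired topological equivalence between $X|_N$ and $\partial/\partial t$.

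\emph{Step 3: identifying $\partial^{in}N$ as a torus.} Since $N$ is connected and diffeomorphic to $\partial^{in}N\times[0,1]$, the entry boundary $\partial^{in}N$ is connected as well. As recalled in the footnote to Definition~\ref{d.plug-decomposition}, any closed connected surface transverse to an Anosov vector field on an orientable $3$-manifold is a $2$-torus, because the weak stable foliation of $X$ restricts to a non-singular foliation on it. Therefore $\partial^{in}N\cong\TT^2$ and $N\cong\TT^2\times[0,1]$, as claimed. The only non-routine ingredient is Step~1, where the hypothesis $\Lambda=\emptyset$ must be leveraged via the $\omega$/$\alpha$-limit set trick; once the exit times are known to be finite, everything reduces to the standard flow-box construction.
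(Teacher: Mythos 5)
Your proof is correct and follows essentially the same route as the paper's: use the hypothesis $\Lambda=\emptyset$ to rule out forward/backward trapped points (the paper phrases this as accumulation on a non-empty maximal invariant set, you via $\omega$/$\alpha$-limit sets, but it is the same argument), then build the flow-box diffeomorphism $\partial^{in}N\times[0,1]\to N$ using the continuous exit time, and finally invoke connectedness of $N$ and the fact that a closed connected surface transverse to an Anosov flow is a torus. One small remark: your formula $\Phi(y,s)=X^{sT(y)}(y)$ is the correct one; the paper's displayed formula $X^{t/\tau(x)}(x)$ is a typo and should read $X^{t\tau(x)}(x)$.
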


\begin{proof}
Recall that we denote by $\partial^{in} N$ (resp. $\partial^{out} N$) the union of the connected components of $\partial N$ where $X$ is pointing inwards (resp. outwards) $N$. First observe that $\partial^{in} N$ is non-empty: indeed, if both $\partial^{in} N$ was empty, them the backward orbit of a point in $N$ could not exit $N$, and therefore would accumulate on a non-empty maximal invariant set, contradicting our assumption. Now, for every $x\in \partial^{in} N$, the forward orbit of $x$ eventually exits $N$ by cutting $\partial^{out} N$ (otherwise it would accumulate on a non-empty maximal invariant set) . Moreover, since $\partial^{in} N$ and $\partial^{out} N$ are transverse to $X$, the exit time $\tau(x)$ depends continuously on $x$. Hence, the map $(x,t)\mapsto X^{\frac{t}{\tau(x)}}(x)$ defines a diffeomorphism between  $\partial^{in} N\times [0,1]$ and $N$. Since $N$ is connected, it follows that $\partial^{in} N$ must also be connected. Therefore $\partial^{in} N$ is a two-torus, and $N$ is diffeomorphic to $\TT^2\times [0,1]$, and $X_{|N}$ is topologically equivalent to the vertical vector field $\frac{\partial}{\partial t}$ on $\TT^2\times [0,1]$.
\end{proof}

The preceeding result allow to characterize Anosov flows with empty core:

\begin{corollary}
\label{c.characterization-suspension}
If $\mathrm{Core}(X)$ is empty, then $X$ is topologically equivalent to the suspension of an Anosov diffeomorphism of $\TT^2$.
\end{corollary}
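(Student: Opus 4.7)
The plan is to exhibit a global transverse cross-section of $X$ and read off a suspension structure. Since $\mathrm{Core}(X)=\emptyset$, every point $x\in M$ lies on some transverse torus $T_x$, and the tubular neighborhoods $\{X^{(-\epsilon_x,\epsilon_x)}(T_x):x\in M\}$ form an open cover of $M$. Compactness yields finitely many transverse tori $T_1,\dots,T_n$ with the property that every orbit of $X$ meets $T_1\cup\cdots\cup T_n$. Applying Corollary~\ref{c.disjoint}, I would replace this family by a finite collection of pairwise disjoint transverse tori $\cT'=\{T'_1,\dots,T'_k\}$ meeting exactly the same orbits---therefore every orbit.

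Cut $M$ along $\cT'$ and let $V_1,\dots,V_m$ be the connected components. Each $V_j$ is a compact connected 3-manifold with boundary transverse to $X$, and $\partial V_j\subset\cT'$. For any $x$ in the interior of $V_j$, the orbit of $x$ in $M$ meets some $T'_i\subset\partial V_j$ at a time $t_0$; since $\partial V_j$ is transverse to $X$, the orbit must cross from one side of $\partial V_j$ to the other at $t_0$, so it exits $V_j$. Hence the maximal invariant set $\bigcap_{t\in\RR}X^t(V_j)$ is empty, and Lemma~\ref{l.parallel} applies to give $V_j\cong\TT^2\times[0,1]$ with $X_{|V_j}$ topologically equivalent to the vertical vector field $\partial/\partial t$. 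In particular $\partial V_j$ has exactly two components, one \emph{in-torus} and one \emph{out-torus}.

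To assemble the pieces, consider the directed graph whose vertices are the $V_j$'s and whose directed edges record the identifications along elements of $\cT'$ (from each out-torus to the in-torus with which it is glued). Each vertex has in-degree and out-degree exactly $1$, so this graph is a disjoint union of directed cycles; connectedness of $M$ forces it to be a single cycle through all of $V_1,\dots,V_m$. Every orbit of $X$ therefore visits each $T'_i$ periodically, so $T'_1$ is a smooth global cross-section for $X$, with smooth first-return map $f:T'_1\to T'_1$. The Anosov splitting of $X$ restricts on $T'_1$ to a continuous $Df$-invariant hyperbolic splitting, so $f$ is an Anosov diffeomorphism of $\TT^2$, and the standard cross-section construction (send $(x,s)\in T'_1\times[0,\tau(x)]$ to $X^s(x)\in M$) exhibits $X$ as topologically equivalent to the suspension of $f$.

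The main technical point is verifying that each $V_j$ has empty maximal invariant set, which hinges on combining the transversality of $\partial V_j$ with the covering property of $\cT'$; once this is in place, the cycle analysis and the suspension description are essentially formal consequences of Lemma~\ref{l.parallel} and the classical theory of global cross-sections.
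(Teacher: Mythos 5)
Your proof is correct, and it departs from the paper's argument at the key step of producing a finite family of transverse tori that chop $M$ into trivial plugs. The paper invokes Proposition~\ref{p.finite} (a cohomological argument producing finitely many transverse tori meeting every periodic orbit outside $\mathrm{Core}(X)$) together with Corollary~\ref{c.disjoint}, and then needs Smale's spectral decomposition theorem (Theorem~\ref{t.basic-sets-decomposition}) to upgrade ``each $V_i$ contains no periodic orbit'' to ``each $V_i$ has empty maximal invariant set.'' You instead exploit the hypothesis $\mathrm{Core}(X)=\emptyset$ directly: every point lies on a transverse torus, so by compactness finitely many tubular neighborhoods $X^{(-\epsilon,\epsilon)}(T_x)$ cover $M$, and after Corollary~\ref{c.disjoint} you have pairwise disjoint transverse tori meeting \emph{every} orbit, which makes each maximal invariant set empty with no hyperbolicity input at all. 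This is more elementary and self-contained for this corollary (only compactness plus Corollary~\ref{c.disjoint}), whereas the paper is reusing machinery it has already set up for the main theorem. From the point where every $V_j$ is shown trivial, the two arguments coincide: Lemma~\ref{l.parallel} gives $V_j\cong\TT^2\times[0,1]$, the cyclic gluing structure makes each $T'_i$ a global cross-section, and the first-return map exhibits $X$ as a suspension of an Anosov diffeomorphism of $\TT^2$. One small wording point: in ``the orbit of $x$ meets some $T'_i\subset\partial V_j$'' the first torus the orbit reaches need not bound $V_j$; what matters is that the tori lie outside the interior of $V_j$, so to reach one the orbit must first cross $\partial V_j$, and the conclusion stands.
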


\begin{proof}
According to Proposition~\ref{p.finite} and Corollary~\ref{c.disjoint}, we can find a finite collection of transverse tori $\cT=\{T_1,\dots,T_n\}$ such that $T_1\cup\dots\cup T_n$ intersects every periodic orbit of $X$. Let us denote $V_1,\dots,V_m$ the plug decomposition of $M$ associated with $\cT$ (definition~\ref{d.plug-decomposition}). By construction, for every $i$, the maximal invariant set of $V_i$ does not contain any periodic orbit of $X$. By Smale's Theorem~\ref{t.basic-sets-decomposition}, it follows that the maximal invariant set of $V_i$ is empty. And by lemma~\ref{l.non-parallel}, it follows that $V_i$ is diffeomorphic to $\TT^2\times [0,1]$ and  and $X_{|V_i}$ is topologically equivalent to the vertical vector field $\frac{\partial}{\partial t}$ on $\TT^2\times [0,1]$. One easily deduces that the $V_i$'s are glued ``cyclically" ($n=m$ and, up to permutation of the indices, the torus $T_i$ separates $V_i$ from $V_{(i+1)\mathrm{mod} n}$) and every orbit of $X$ intersects each of the tori $T_1,\dots,T_n$. In particular, there is a transverse torus intersecting every orbit of $X$. Hence, $X$ is topologically equivalent to a suspension.
\end{proof}

We can now complete the proof of theorem~\ref{t.main}:

\begin{proof}[Proof of theorem~\ref{t.main}]
For every finite collection $\cS=\{S_1,\dots,S_n\}$ of pairwise disjoint transverse tori, 
we will denote by $M_{\cS}$ the compact manifold with boundary obtained by cutting $M$ along $\cS$. 
More precisely, $M_{\cS}=M\setminus \left(X^{(-\epsilon,\epsilon)}(S_1)\cup\dots\cup X^{(-\epsilon,\epsilon)}(S_n)\right)$, 
where $\epsilon>0$ is chosen small enough so that the tubular neighborhoods 
$X^{(-\epsilon,\epsilon)}(S_1),\dots,X^{(-\epsilon,\epsilon)}(S_n)$ are pairwise disjoint. 
Obviously, $M_{\cS}$ does not depend on the choice of $\epsilon$ up to isotopy along the orbits of $X$. 
Note that the plug decomposition of $M$ associated with $\cS$ (definition~\ref{d.plug-decomposition}) is 
nothing but the collection of the connected component of $M_\cS$. 
We will denote by $\Lambda_\cS$ the maximal invariant set of $M_{\cS}$. 

By assumption, $X$ is a not topologically equivalent to a suspension. 
According to corollary~\ref{c.characterization-suspension}, 
it follows that $\mathrm{Core}(X)$ is non-empty.  
As a consequence, for every collection $\cS$ pairwise disjoint transverse tori, 
the maximal invariant set $\Lambda_\cS$ is non-empty: indeed $\Lambda_\cS$ contains $\mathrm{Core}(X)$ which is not empty.

\bigskip

Proposition~\ref{p.finite} together with corollary~\ref{c.disjoint} provide 
a finite collection of pairwise disjoint transverse tori $\cT:=\{T_1,\dots,T_p\}$ such that 
$T_1\cup\dots\cup T_p$ intersects every periodic orbit in $M-\mathrm{Core}(X)$. 
We consider the compact manifold with boundary $M_\cT$ obtained by cutting $M$ along $\cT$, 
and the maximal invariant set $\Lambda_\cT$ of $M_\cT$. The set $\Lambda_{\cT}$ is a compact, invariant, locally maximal, 
and hyperbolic for $X$. By theorem~\ref{t.basic-sets-decomposition}, $X_{|M_{\cT}}$ has only finitely many chain recurrence classes 
$\Lambda_1,\dots,\Lambda_m$, and the periodic orbits of $X$ are dense in each $\Lambda_i$.

Theorem~\ref{t.Lyapunov} provides us with a complete smooth Lyapunov function $f:M_{\cT}\to [-1,1]$ for $X_{|M_{\cT}}$. 
So, for every $i$, there exists a real number $-1<a_i<1$ such that $\Lambda_i \subset f^{-1}(a_i)$, and $a_i\neq a_j$ for $i\neq j$. 
Changing the indexation of the $\Lambda_i$'s if necessary, we assume that $a_1<a_2<\dots<a_m$. Then we pick some real numbers $c_1,\dots,c_{m-1}$ such that $a_i<c_i<a_{i+1}$ for every $i$. Since $df(x).X<0$ at every point $x$ which is not chain-recurrent, $c_1,\dots,c_{m-1}$ are regular values of $f$. Let $T_{p+1},\dots,T_{p+\ell}$ be the collection of the connected components of $f^{-1}(c_1)\sqcup\dots\sqcup f^{-1}(c_{m-1})$. Then $T_{p+1},\dots,T_{p+\ell}$ is a finite collection of pairwise disjoint transverse tori embedded in the interior of $M_{\cT}$. Hence $\cT':=\{T_1,\dots,T_{p+\ell}\}$ is 
a finite collection of pairwise disjoint transverse tori. The manifold $M_{\cT'}$, obtained by cutting $M$ along $\cT'$, can also be obtained by cutting $M_{\cT}$ along $\{T_{p+1},\dots,T_{p+\ell}\}$. Using this remark and again the fact that $df(x).X<0$ for every $x$ which is not chain-recurrent, we obtain that the maximal invariant set of $\Lambda_{\cT'}$ is the chain-recurrent set of $X_{M_{\cT}}$. Therefore $\Lambda_{\cT'}=\Lambda_1\sqcup\dots\sqcup\Lambda_m$. Moreover, by construction of $T_{p+1}\sqcup\dots\sqcup T_{p+\ell}$, each connected component of $M_{\cT'}$  contains at most one $\Lambda_i$.

Now, we claim that $\Lambda_1\sqcup\dots\sqcup\Lambda_m=\mathrm{Core}(X)$. On the one hand, $\mathrm{Core}(X)$ is obviously included $\Lambda_1\cup\dots\cup\Lambda_m$ (recall that $\Lambda_1\sqcup\dots\sqcup\Lambda_m$ is the maximal invariant set of the complement of a collection of transverse tori). On the other hand, the periodic orbits of $X$ are dense in $\Lambda_1\sqcup\dots\sqcup\Lambda_m$. So, if  $\mathrm{Core}(X)$ were not equal to $(\Lambda_1\sqcup\dots\sqcup\Lambda_m)$, then there would be a periodic orbit in $(\Lambda_1\sqcup\dots\sqcup\Lambda_k)-\mathrm{Core}(X)$. But this is impossible since $\Lambda_1\sqcup\dots\sqcup\Lambda_k$ is the maximal invariant set of  $M_{\cT'}$, and $T_1\cup\dots\cup T_p$ intersects every periodic orbit in $\mathrm{Core}(X)$. Therefore, $\Lambda_1\sqcup\dots\sqcup\Lambda_m=\mathrm{Core}(X)$.

This completes the proof of item~1: indeed, the equality $\mathrm{Core}(X)=\Lambda_1\sqcup\dots\sqcup\Lambda_m$ shows in particular that $\mathrm{Core}(X)$ is a locally maximal set. Moreover, this equality gives the decomposition of $\mathrm{Core}(X)$ as a finite union of pairwise disjoint basic sets.

In addition, we have found a finite collection of pairwise disjoint transverse tori $\cT':=\{T_1,\dots,T_{p+\ell}\}$, such that $\Lambda_{\cT'}=\mathrm{Core}(X)=\Lambda_1\sqcup\dots\sqcup\Lambda_m$, and such that each connected component of $M_{\cT'}$ contains at most one $\Lambda_i$. Now, consider a maximal sub-collection $\cT''=\{T_{i_1},\dots,T_{i_q}\}$ of $\cT'$, so that the elements of $\cT''$ are pairwise non-parallel. By construction, every torus in $\cT'\setminus\cT''$ is parallel to a torus in $\cT''$.  Denote $V_1,\dots,V_r$ the connected components of $M_{\cT''}$ (by definition, $V_1,\dots,V_r$ is the plug decomposition associated with the collection of tori $\cT''$). According to lemma~\ref{l.non-parallel}, for each $j=1,\dots,r$, the maximal invariant set of $V_j$ coincides with the maximal invariant set of a connected component of $M_{\cT'}$, \emph{i.e.} is either empty or one of the $\Lambda_i$'s. But, since the elements of $\cT''$ are pairwise non-parallel, $V_j$ cannot be diffeomorphic to $\TT^2\times [0,1]$, and therefore (by Lemma~\ref{l.parallel}), the maximal invariant set of $V_j$ cannot be empty. Therefore, $r=m$ and, for $j=1\dots m$, the maximal invariant set of $V_j$ is one of the $\Lambda_i$'s. Of course, re-ordering the $\Lambda_i$'s if necessary, we can assume that $\Lambda_j$ the maximal invariant set of $V_j$. This completes the proof of item~2.
\end{proof}

\section{Fine plug decompositions up to piecewise topological equivalence}

The purpose of this section is to prove Theorem~\ref{t.finitely-many}, \emph{i.e.} to prove that, up to piecewise topological equivalence, there are only finitely many fine plug decomposition of a given Anosov flow. For this purpose, we need to recall a few general things about hyperbolic plugs, and to study what are the particular features of a hyperbolic that arise in a plug decomposition of an Anosov vector field.

\begin{definition}
\label{d.laminations}
Let $(U,X)$ be a hyperbolic plug. Recall that the vector field $X$ is transverse to the boundary $\partial U$. Also recall that we decompose $\partial U$ as $\partial^{in} U\sqcup\partial^{out} U$, where $\partial^{in} U$ (resp. $\partial^{out} U$) is the union of the connected components of $\partial U$ where $X$ pointing inward (resp. outward) $U$. The surface
$\partial^{in} U$ (resp. $\partial^{out} U$) is called the \emph{entrance boundary} (resp. the \emph{exit boundary}) of $U$. We denote by $\cL^s(U,X)$ the set of all points of $\partial^{in} U$ whose forward orbit remain forever in $U$. Similarly, we denote by $\cL^u(U,X)$ the set of all points of $\partial^{out} U$ whose backward orbit remain forever in $U$.The set $\cL^s(U,X)$ (resp. $\cL^u(U,X)$) is called the \emph{entrance lamination} (resp. the \emph{exit lamination}) of $(U,X)$.
\end{definition}

If we denote by $\Lambda$ the maximal invariant set of a hyperbolic plug $(U,X)$, then $\cL^s(U,X)$ is the intersection of the stable lamination $W^s_U(\Lambda)$ and the entrance boundary $\partial^{in} U$, and $\cL^u(U,X)$ is the intersection of the unstable lamination $W^u_U(\Lambda)$ and the exit boundary $\partial^{out} U$. Since $W^s_U(\Lambda)$ and $W^u_U(\Lambda)$ are compact laminations with two-dimensional leaves which are transverse to $\partial U$ (because $X$ is transverse to $\partial U$, it follows that $\cL^s(U,X)$ and $\cL^u(U,X)$ are laminations with one-dimensional leaves.

\begin{definition}
We say that $(U,X)$ has \emph{a trivial connected component} if there is a connected component $U_0$ of $U$ such that the maximal invariant set $\bigcap_{t\in\RR} X^t(U_0)$ is empty.
\end{definition}

Note that, by definition of $\cL^s(U,X)$, the hyperbolic $(U,X)$ has no trivial connected component if and only if  the lamination $\cL^s(U,X)$ intersects every connected component of the entrance boundary $\partial^{in} U$ (or equivalently, if and only if, the lamination $\cL^u(U,X)$ intersects every connected component of the exit boundary $\partial^{out} U$).

The entrance and exit laminations of a hyperbolic plug are very peculiar laminations:

\begin{proposition}[Proposition~2.8 of \cite{BeBoYu}]
\label{p.general-properties-lamination}
Given any hyperbolic plug $(U,X)$,
\begin{enumerate}
\item $\cL^s(U,X)$ has only finitely many closed leaves,
\item every half non-closed leaf of $\cL^s(U,X)$ is asymptotic to a closed leaf (\emph{i.e.} each end of a non-closed leaf of $\cL^s(U,X)$ ``spirals around a closed leaf"),
\item every closed leaf of $\cL^s(U,X)$ can be oriented so that its holonomy is a contraction.
\end{enumerate}
Of course, similar properties hold for $\cL^u(U,X)$.
\end{proposition}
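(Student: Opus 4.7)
I would proceed via the tautological identification $\cL^s(U,X) = W^s(\Lambda)\cap\partial^{in}U$, where $\Lambda=\bigcap_{t\in\RR}X^t(U)$ is the (hyperbolic) maximal invariant set, and address the three items in the order (3), (1), (2), since each subsequent item uses its predecessor. The preliminary observation is that a closed leaf $c\subset\cL^s(U,X)$ corresponds to a periodic orbit of $X$: indeed the cylinder $\bigcup_{t\ge 0}X^t(c)$ is embedded in the 2-dimensional leaf of $W^s(\Lambda)$ containing $c$, and as $t\to+\infty$ the curve $X^t(c)$ Hausdorff-converges to a single closed orbit $\gamma=\gamma(c)\subset\Lambda$ traversed some integer number of times $k\ge 1$.

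For item (3), I would pick a small disk $\Sigma\subset U$ transverse to $X$ meeting $\gamma$ at a single point $p$, and let $\phi:(\Sigma,p)\to(\Sigma,p)$ be the Poincar\'e first-return map; hyperbolicity of $\Lambda$ makes $p$ a hyperbolic fixed point of $\phi$ with stable eigenvalue $\lambda^s\in(0,1)$. Following orbits of $X$ backward from $\Sigma$ to $\partial^{in}U$ produces a diffeomorphism between a neighborhood of $p$ in $\Sigma$ and a neighborhood of a chosen point of $c$ in $\partial^{in}U$; under this diffeomorphism, the lamination holonomy of $\cL^s$ once around $c$ is conjugate to the restriction of $\phi^{k}$ to the stable direction at $p$, hence is a $(\lambda^s)^k$-contraction once $c$ is oriented so that this iterate moves forward in time.

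Item (1) then follows from (3) by a compactness argument. If infinitely many closed leaves $(c_n)$ existed in $\cL^s$, the compactness of $\partial^{in}U$ would yield a Hausdorff-convergent subsequence $c_n\to C\subset\cL^s$, and one locates in $C$ a simple closed curve $c_\infty$ which is itself a closed leaf of $\cL^s$ (a limit of pairwise disjoint simple closed curves in a closed 1-lamination is a closed leaf). For $n$ large, $c_n$ would lie in any prescribed transverse tubular neighborhood of $c_\infty$; but the contracting holonomy of $c_\infty$ furnished by (3) forces every leaf other than $c_\infty$ in such a neighborhood to spiral toward $c_\infty$ and so to be non-closed, a contradiction.

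Item (2) is the subtlest. Let $\ell$ be a non-closed leaf of $\cL^s$ and $K\subset\cL^s$ the accumulation set of one of its ends; $K$ is a non-empty closed sublamination and the goal is to exhibit a closed leaf inside $K$. Once such a closed leaf $c\subset K$ is found, the contracting holonomy of (3) forces the end of $\ell$ (which is eventually contained in an arbitrarily small transverse neighborhood of $c$) to spiral around $c$. To produce the closed leaf I would pick a minimal sublamination $K_0\subset K$ and rule out the possibility that $K_0$ is an \emph{exceptional} (transversally Cantor) minimal lamination. The exclusion uses the dynamical origin of $\cL^s$: via the identification $\cL^s=W^s(\Lambda)\cap\partial^{in}U$, an exceptional $K_0$ would correspond to a saturated piece of $W^s(\Lambda)$ in $U$ without closure on any periodic orbit, contradicting the Smale spectral decomposition of $\Lambda$ into basic sets together with the classical fact that each 3-dimensional basic set contains only finitely many ``boundary" periodic orbits (those on the boundary of the local product structure), whose stable manifolds provide the desired closed leaves of $\cL^s$ in $K$. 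This exclusion of exceptional minimal sublaminations is the main obstacle; granting it, items (1), (2) and (3) all fall into place as sketched.
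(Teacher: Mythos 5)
The paper itself does not prove this statement: it cites it directly as Proposition~2.8 of \cite{BeBoYu}, so there is no internal argument to compare against. Evaluated on its own merits, your plan correctly begins from the tautology $\cL^s(U,X)=W^s_U(\Lambda)\cap\partial^{in}U$, but two of its steps do not hold up.

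In item~(3) the identification of the holonomy is backwards. Two nearby leaves of $\cL^s$ near a closed leaf $c$ are the traces on $\partial^{in}U$ of two nearby two-dimensional leaves of $W^s_U(\Lambda)$, and in local product coordinates around the periodic orbit $\gamma$ these are distinguished by their \emph{unstable} coordinate $u$, not the stable one. Pushing a transversal to $c$ forward along the flow into a Poincar\'e section $\Sigma$ at $\gamma$, one obtains an arc transverse to $W^s_U(\Lambda)\cap\Sigma$, i.e.\ roughly parallel to the $u$-axis; going once around $c$ (oriented compatibly with the flow on $\gamma$) then acts on that arc by $u\mapsto\lambda^u u$ with $|\lambda^u|>1$, an \emph{expansion}. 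The conclusion that the holonomy is hyperbolic — hence a contraction after reversing the orientation of $c$ — is correct, but your justification via the stable eigenvalue $(\lambda^s)^k$ misidentifies what the holonomy is.

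The more serious problem is the exclusion of exceptional (transversally Cantor) minimal sublaminations of $\cL^s$, on which both~(1) and~(2) rest. Your proof of~(1) asserts in passing that ``a limit of pairwise disjoint simple closed curves in a closed 1-lamination is a closed leaf''; this is false in general: the Hausdorff limit of the $c_n$ contains a minimal sublamination, which a priori could be exceptional, so the exclusion must be established \emph{before} the argument for~(1) can run, not afterwards in~(2). And the exclusion you sketch for~(2) is not yet an argument: a non-trivial basic set has a dense set of non-periodic orbits whose stable manifolds do not close up on periodic orbits, so the mere existence of ``a saturated piece of $W^s(\Lambda)$ without closure on any periodic orbit'' contradicts nothing by itself. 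What has to be shown is that the recurrence forced by an exceptional minimal in the one-dimensional lamination $\cL^s$ on the compact surface $\partial^{in}U$ is incompatible with the hyperbolic, transversally expanding holonomy of $W^s_U(\Lambda)$. That step is the heart of the proposition, and as written your plan identifies the difficulty without overcoming it.
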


In the present paper, we are interested in a quite particular type of hyperbolic plugs:

\begin{definition}
A hyperbolic plug $(U,X)$ is called \emph{of Anosov type} if there exist a closed three-manifold $M$ and an embedding $\theta:U\hookrightarrow M$, such that $\theta_*X$ is the restriction an Anosov vector field on $M$.
\end{definition}

Of course, all hyperbolic plugs which appear in a plug decomposition of an Anosov vector field are of Anosov type. The entrance and exit laminations of a hyperbolic plug of Anosov type satisfy some specific properties, as shown by Proposition~\ref{p.lamination-Anosov-type} below.

\begin{definition}
\label{d.strip}
Let $(U,X)$ be a hyperbolic plug. A connected component $C$ of $\partial^{in} U\setminus \cL^s(U,X)$ (resp. $\partial^{out} U\setminus \cL^u(U,X)$) is called a \emph{strip} if it is homeomorphic to $\RR\times (0,1)$, and if the accessible boundary of $C$ is made of two non-closed leaves of $\cL^s(U,X)$ (resp. $\cL^u(U,X)$) which are asymptotic to each other at both ends.
\end{definition}

\begin{proposition}
\label{p.lamination-Anosov-type}
Let $(U,X)$ be a hyperbolic plug with no trivial component. Assume that $(U,X)$ is of Anosov type. Then each connected component of $\partial^{in} U\setminus \cL^s(U,X)$ (resp. $\partial^{out} U\setminus \cL^u(U,X)$) is either an annulus bounded by compact leaves of $\cL^s(U,X)$, or a strip in the sense of definition~\ref{d.strip}.
\end{proposition}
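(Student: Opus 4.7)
My plan is to embed $(U,X)$ into a closed orientable $3$-manifold $N$ carrying an Anosov vector field $Y$ extending $X$, use the weak stable foliation $\cF^{ws}$ of $Y$ to equip each component torus $T$ of $\partial^{in}U$ with a nonsingular $1$-dimensional foliation $\cG\supset\cL^s(U,X)$, and then classify the complementary regions of $\cL^s(U,X)$ in $T$ using the structure of $\cG$ on the torus together with Proposition~\ref{p.general-properties-lamination}.

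The setup works as follows. Since $T$ is transverse to $Y$ and the $2$-dimensional leaves of $\cF^{ws}$ contain $Y$, the $2$-planes $T_xT$ and $T_x\cF^{ws}$ differ at every $x\in T$ (the latter contains $Y(x)$, the former does not), so $\cF^{ws}$ cuts $T$ in a nonsingular $1$-dimensional foliation $\cG$. Each leaf of $\cL^s(U,X)$ is the trace on $T$ of a weak stable leaf through a point of the maximal invariant set, hence a full leaf of $\cG$; so $\cL^s(U,X)$ is a sublamination of $\cG$, and every connected component $C$ of $T\setminus\cL^s(U,X)$ is $\cG$-saturated.

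Now let $C$ be such a component. If every accessible boundary leaf of $C$ is closed, then these leaves are finitely many (Proposition~\ref{p.general-properties-lamination}(1)) essential simple closed curves on $T$ (the contracting holonomy of part~(3) rules out null-homotopy), pairwise disjoint hence pairwise parallel on the torus, so $C$ is an open annulus bounded by compact leaves of $\cL^s(U,X)$. If instead some accessible boundary leaf $\ell$ of $C$ is non-closed, then by Proposition~\ref{p.general-properties-lamination}(2) each end of $\ell$ spirals onto a closed leaf of $\cL^s(U,X)$, call them $\ell_+$ and $\ell_-$. The key step is then to show $C$ is a strip: since $C$ is $\cG$-saturated and every $\cG$-leaf in $C$ near $\ell$ must share the asymptotic behavior of $\ell$ (they live in the same $\cG$-annular region of $T$ and spiral to the same boundary closed leaves), following the transverse $\cG$-parametrization across $C$ one produces a companion non-closed leaf $\ell'\in\cL^s(U,X)$ on the opposite accessible boundary of $C$, with the same asymptotics as $\ell$, so that $C$ is a strip in the sense of Definition~\ref{d.strip}.

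The main obstacle will be to make this ``following across $C$'' argument work, i.e.\ to rule out exotic complementary regions such as a half-plane bounded by a single non-closed leaf both of whose ends spiral to the same closed leaf from the same side, or a crown-type region bounded by several asymptotic leaves. This is where the Anosov hypothesis is essential: $\cF^{ws}$ is a Reebless (taut) foliation on $N$ by a theorem of Plante, which forces $\cG$ to have no Reeb annulus on $T$, and hence (within each $\cG$-annular region of $T$) the non-closed leaves of $\cG$ run uniformly from one boundary component to the other. This Reeblessness is precisely what promotes a single non-closed accessible boundary leaf of $C$ into a genuine opposite companion leaf and yields the strip structure; without it the second case of the proposition breaks down.
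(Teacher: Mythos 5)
Your setup is correct and matches the paper's: embedding $(U,X)$ in an Anosov triple $(N,Y)$, letting the weak stable foliation $\cF^{ws}$ cut a one-dimensional foliation $\cG\supset\cL^s(U,X)$ on each torus of $\partial^{in}U$. The annulus case also works, though your appeal to contracting holonomy to rule out null-homotopic closed leaves is not quite the right mechanism; the paper instead compactifies $C$ and observes that $\overline C$ carries a nonsingular vector field transverse to $\partial\overline C$ (a transversal to $\cG$), forcing $\chi(\overline C)=0$, which is cleaner and avoids any separate essentiality argument.

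The genuine gap is in the strip case, and you have correctly identified where it is but not closed it. You assert that ``following the transverse $\cG$-parametrization across $C$'' produces a companion leaf $\ell'$, and you invoke Reeblessness of $\cF^{ws}$ (Plante) to justify this. But Reeblessness of the three-dimensional foliation, even granting that it passes to the induced surface foliation $\cG$ (a nontrivial claim in itself, closer to Barbot's \emph{Mise en position optimale} than to Plante), does not by itself tell you that a transversal to $\cG$ started on $\ell$ exits $C$ in finite time, nor that all such transversals exit through the \emph{same} leaf, nor that the resulting exit map is a homeomorphism. Between the two closed $\cL^s$-leaves $\ell_\pm$ that $\ell$ spirals onto, $\cG$ can a priori have additional closed leaves, Denjoy-type minimal sets, etc., none of which belong to $\cL^s$; your claim that ``every $\cG$-leaf in $C$ near $\ell$ must share the asymptotic behavior of $\ell$'' is exactly what needs proof and is not supplied by Reeblessness.

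The paper sidesteps all of this with a different and more robust device: it does \emph{not} work with $\cG$ directly, but perturbs a vector field transverse to $\cG$ into a nonsingular \emph{Morse--Smale} vector field $Z$ still transverse to $\cL^s(U,X)$. The Morse--Smale structure is what makes the key step go through: the set $E\subset\ell$ of points whose forward $Z$-orbit exits $C$ is open by transversality to $\cL^s$, its complement is open because any $Z$-orbit that stays in $C$ forever lies in the (open) basin of an attracting periodic orbit of $Z$ inside $C$, and $E$ is nonempty because near the ends of $\ell$ (which spiral onto $\ell_\pm$, transverse to $Z$) every $Z$-orbit recrosses $\ell$ in short time. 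Hence $E=\ell$. Continuity of the exit time $\tau$, the fact that $\tau\to 0$ at the ends, and transversality then give the explicit homeomorphism $(x,t)\mapsto Z^{t/\tau(x)}(x)\colon\ell\times(0,1)\to C$ and identify the accessible boundary as $\ell\cup\ell'$ with $\ell,\ell'$ asymptotic at both ends. This is the content you would need to supply; as written your argument assumes it.
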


\begin{proof}
We begin by constructing a vector field transverse to the lamination $\cL^s(U,X)$.

\medskip

\noindent \textit{Claim. There is a non-singular Morse-Smale vector field $Z$ on $\partial^{in} U$ which is transverse to the lamination $\cL^s(U,X)$.}

\smallskip

Let $\Lambda$ be the maximal invariant set of $(U,X)$. If $(U,X)$ is of Anosov type, then there exists a closed three-manifold $M$, an Anosov vector field $\widehat X$ on $M$, and an embedding $\theta:U\hookrightarrow M$ such that $\theta_*X=\widehat X$. By transversality, the weak stable foliation of the Anosov vector field $\widehat X$ induces a one-dimensional foliation $\cF$ on $\theta(\partial^{in} U)$. Since $\theta(\Lambda)$ is a hyperbolic basic set of $X$, and the lamination $\theta_*\cL^s(U,X)$ is the intersection of the local weak stable lamination $W^s_U(\theta(\Lambda))$ with $\partial^{in} U$. It follows that the foliation $\cF$ contains $\theta_*\cL^s(U,X)$ as a sublamination. Now, we consider a non-singular vector field $Z$ on the surface $\partial^{in} U$ which is transverse to the foliation $(\theta)^{-1}_*(\cF)$ (one may, for example, endow $\partial^{in} U$ with a riemannian metric and consider a unitary vector field orthogonal to $(\theta)^{-1}_*(\cF)$). Since Morse-Smale vector fields are dense in dimension 2, we can perturb $Z$ so that it is a Morse-Smale vector field. We choose the perturbation is small enough, $Z$ is still transverse to $(\theta)^{-1}_*(\cF)$. In particular, $Z$ is transverse to the lamination $\cL^s(U,X)$.  This completes the claim.

\medskip

Let $C$ be a connected component of $\partial^{in} U\setminus \cL^s(U,X)$. Since $(U,X)$ has no trivial connected component, $C$ cannot be a whole connected component of $\partial^{in} U$, \emph{i.e.} the boundary of $C$ is non-empty. As a consequence, the accessible boundary of $C$ is also non-empty. Both the boundary of $C$ and the accessible boundary of $C$ are union of leaves of the lamination $\cL^s(U,X)$.

\medskip

We first consider the case where every leaf of $\cL^s(U,X)$ in the accessible boundary of $C$ is a closed leaf. Since $\cL^s(U,X)$ has only a finite number of closed leaves (Proposition~\ref{p.general-properties-lamination}), the boundary of $C$ coincides with the accessible boundary, \emph{i.e.} the boundary of $C$ is made of closed leaves of $\cL^s(U,X)$. It follows that $C$ is homeomorphic to the interior of a compact surface $\overline{C}$. According to the claim above, the compact surface $\overline{C}$ carries a non-singular vector field, which is transverse to its boundary. Hence the Euler characteristic of $C$ is equal to $0$, \emph{i.e.} $C$ is an annulus.

\medskip

Now we consider the case where there is a non-closed leaf $\gamma$ in the accessible boundary of $C$. Replacing the vector field $Z$ by $-Z$ if necessary, we may ---\;and we do\;--- assume that $Z$ is pointing inwards $C$ along $\gamma$.

\medskip

\noindent \textit{Claim. For every $x\in\gamma$, the forward orbit of $x$ under the flow of $Z$ eventually exits $C$.}

\smallskip

Let $E$ be the set made of the points $x\in\gamma$ such that the forward orbit of $x$ under the flow of $Z$ eventually exits $C$ (\emph{i.e.} such that there exists $t>0$ such that $Z^t(x)\notin C$). The forward orbit  a point $x\in\gamma$ eventually exists $C$ if and only if it intersects the lamination $\cL^s(U,X)$. Since the vector field $Z$ is transverse to the lamination $\cL^s(U,X)$, it follows that $E$ is an open subset of $\gamma$. On the other hand, if the forward of a point $x\in\gamma$ remains in $C$ forever, then the forward orbit of $x$ is included in the basin of an attracting periodic orbit of $Z$ contained in $C$ (recall that $Z$ is a non-singular  Morse-Smale vector field, hence every forward orbit is attracted by a periodic orbit). Hence, $\gamma\setminus E$ coincides with the intersection of $\gamma$ with the union of the basins of the attracting periodic orbits of $Z$ contained in $C$. In particular, $\gamma\setminus E$ is an open subset of $\gamma$. So we have proved that $E$ is an open and closed subset of $\gamma$. To complete the proof of the claim, it remains to show that $E$ is non-empty. The ends of $\gamma$ spiral around some closed leaves $\alpha_1,\alpha_2$ of $\cL^s(U,X)$. The vector field $Z$ is transverse to $\alpha_1,\alpha_2$. Hence, in some neighborhood of $\alpha_i$, every orbit of $Z$ cuts $\gamma$ infinitely many times. It follows that, for $x\in\gamma$ close enough to $\alpha_i$, the forward orbit of $x$ will intersect $\gamma$, and therefore exit $C$. In other words, the set $E$ contains some neighborhood of the ends of $\gamma$. This completes the proof of the claim.

\medskip

Now we prove that $C$ is a strip. For every point $x\in\gamma$, let $\tau(x)=\inf\{t>0, Z^t(x)\notin C\}$. The above claim ensures that $\tau(x)$ is finite for every $x\in\gamma$. Since the boundary of $C$ is made of leaves of the lamination $\cL^s(U,X)$ and since $Z$ is transverse to $\cL^s(U,X)$, the time $\tau(x)$ must depend continuously on $x$. Moreover, since the ends of $\gamma$ spirals around closed leaves and since $Z$ is transverse to these closed leaves, $\tau(x)$ must tend to $0$ when $x$ approaches the ends of $\gamma$. We consider the map $\phi:\gamma\times (0,1)\to C$ defined by $\phi(x,t):=Z^{t/\tau(x)}(x)$. This map is continuous (because the flow of $Z$ and $x\mapsto \tau(x)$ are continuous), one-to-one (otherwise this would imply that the forward orbit of some point $x\in\gamma$ intersects $\gamma$ before exiting $C$, which is of course impossible), and proper (since $\tau(x)$ tends to $0$ when $x$ approaches the ends of $\gamma$). It follows that $\phi$ is a homeomorphism. In particular, $C$ is homeomorphic to $\RR\times (0,1)$. Now, by transversality, the forward orbits of all the points of $\gamma$ exit $C$ through the same leaf $\gamma'$. By construction, this leaf is contained in the accessible boundary of $C$. It is non-compact, since $x\mapsto Z^{\tau(x)}(x)$ defines a homeomorphism between $\gamma$ and $\gamma'$. Moreover $\gamma$ and $\gamma'$ are asymptotic to each other at both ends, since $\tau(x)$ tends to $0$ when $x$ approaches the ends of $\gamma$. One easily deduces that the accessible boundary of $C$ is reduced to $\gamma\cup\gamma'$. Hence, $C$ is a strip in the sense of definition~\ref{d.strip}. This completes the proof of the proposition.
\end{proof}

Now we will quickly recall the main results of~\cite{BeguinBonatti2002}. More precisely, we will define some particular hyperbolic plugs (called \emph{models}) and explain how an arbitrary hyperbolic plug $(U,X)$ can be obtained from a model by some simple surgeries called \emph{handle attachments}.

\begin{definition}
\label{d.germ}
Let $X, X'$ be vector fields on some three-manifolds (possibly with boundary) $M, M'$, and $\Lambda, \Lambda'$ be some compact invariant sets for $X, X'$ respectively. We say that the germ of $X$ along $\Lambda$ coincides with the germ of $X'$ along $\Lambda'$ if there exist neighborhoods $O, O'$ of $\Lambda, \Lambda'$ respectively, such that $X_{|O}$ is topologically equivalent to $X'_{|O'}$.
\end{definition}

\begin{definition}
\label{d.model}
Let $X$ be a vector field on a closed three-manifold, and $\Lambda$ be a hyperbolic basic set of $X$. A \emph{model} of the germ of $X$ along $\Lambda$ is a hyperbolic plug $(\widetilde U,\widetilde X)$ with maximal invariant set $\widetilde\Lambda$ such that~:
\begin{itemize}
\item  $(\widetilde U,\widetilde X)$ has no trivial connected component;
\item the germ of $\widetilde X$ along $\widetilde\Lambda$ coincides with the germ of $X$ along $\Lambda$;
\item every simple closed curve in $\partial^{in}\widetilde U\setminus \cL^s(\widetilde U,\widetilde X)$ bounds a disc in  $\partial^{in}\widetilde U\setminus \cL^s(\widetilde U,\widetilde X)$.
\end{itemize}
\end{definition}

One of the main results of~\cite{BeguinBonatti2002} is the following:

\begin{theorem}[Theorem 0.3 of \cite{BeguinBonatti2002}]
\label{t.model}
For every vector field $X$ on a closed three-manifold, and every hyperbolic basic set $\Lambda$ of $X$, the model of the germ of a vector field $X$ along a hyperbolic set $\Lambda$ exists and is unique up to topological equivalence.
\end{theorem}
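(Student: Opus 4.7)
The plan is to split the proof into existence and uniqueness.

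For existence, I would start from an arbitrary isolating hyperbolic neighborhood $(U_0,X)$ of $\Lambda$ — for instance, one built from a local product structure around $\Lambda$ together with a Lyapunov function on a small ambient neighborhood. This is already a hyperbolic plug with no trivial component and with the prescribed germ along $\Lambda$. Its entrance boundary $\partial^{in} U_0$ is a compact surface carrying the lamination $\cL^s(U_0,X)$, which has only finitely many closed leaves by Proposition~\ref{p.general-properties-lamination}; consequently $\partial^{in} U_0\setminus\cL^s(U_0,X)$ has finitely many components, each of finite topological type. Whenever a simple closed curve $\gamma$ in such a component fails to bound a disc in the component, I would attach a $2$-handle to $U_0$ along $\gamma$, embedded in the wandering region $U_0\setminus\Lambda$ so that the germ of $X$ along $\Lambda$ is preserved. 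The attaching boundary of the handle can be made transverse to the extended vector field by construction. Iterating finitely many such handle attachments on both the entrance and exit sides produces the required model $(\widetilde U,\widetilde X)$.

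For uniqueness, let $(\widetilde U_1,\widetilde X_1)$ and $(\widetilde U_2,\widetilde X_2)$ be two models with maximal invariant sets $\widetilde\Lambda_1,\widetilde\Lambda_2$. The germ coincidence supplies a topological equivalence $h_0\colon O_1\to O_2$ between neighborhoods of the $\widetilde\Lambda_i$. I would extend $h_0$ to a topological equivalence between the full plugs in three stages: (i) flow $h_0$ backward along $\widetilde X_i$ until orbits meet $\partial^{in} \widetilde U_i$, producing a homeomorphism between the entrance laminations $\cL^s(\widetilde U_i,\widetilde X_i)$; (ii) extend this lamination homeomorphism to a homeomorphism between the entrance boundaries $\partial^{in} \widetilde U_1$ and $\partial^{in} \widetilde U_2$ using the disc condition of Definition~\ref{d.model}; (iii) use the flow once more to propagate the boundary identification inward, gluing it with $h_0$ in a neighborhood of the $\widetilde\Lambda_i$. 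The exit side is treated symmetrically.

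The main obstacle is step (ii). Here condition~3 of Definition~\ref{d.model} is essential: it forces each component of $\partial^{in} \widetilde U_i\setminus\cL^s_i$ to be simply connected, hence homeomorphic either to an open disc (whose frontier consists of a single simple closed leaf) or to an open strip in the sense of Definition~\ref{d.strip}. Such a simply-connected open region is determined up to homeomorphism by the combinatorial data on its accessible boundary — the list of bordering leaves and their asymptotic behaviour — so the homeomorphism of laminations produced in step (i) extends essentially uniquely across each complementary region. Without this disc-bounding property, an annular complementary region could be filled by distinct homeomorphisms differing by a Dehn twist along its core and uniqueness would fail. A subtlety I expect to handle is that the extension across a strip is \emph{a priori} only defined up to sliding along its two asymptotic boundary leaves; one must verify that the matching prescribed by $h_0$ is already rigid here, which follows from the contracting holonomy of the closed leaves the strip spirals around (Proposition~\ref{p.general-properties-lamination}). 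Once step (ii) is carried out, steps (i) and (iii) are routine, since every orbit of $\widetilde X_i$ outside $\widetilde\Lambda_i$ exits the plug in finite forward or backward time.
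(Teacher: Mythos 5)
The paper does not prove this statement; Theorem~\ref{t.model} is quoted from \cite{BeguinBonatti2002} and used as a black box, so there is no internal argument in the present paper against which to compare your sketch. I can only assess it on its own terms.

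Your general strategy (existence by simplifying an arbitrary isolating block via surgeries in the wandering region, uniqueness by propagating the germ equivalence outward to the boundary and showing the disc condition rigidifies the extension) is sound in outline and consistent with how such results are proved. However, the existence half is imprecise in a way that would need repair. When you ``attach a 2-handle along $\gamma$'' for $\gamma\subset\partial^{in} U_0\setminus \cL^s(U_0,X)$, you are implicitly undoing a handle attachment in the sense of Definition~\ref{d.handle-attachment}, but that operation is not localized to the entrance side: since $\gamma$ is disjoint from $\cL^s$, its full $X$-orbit in $U_0$ is an annulus running from $\partial^{in} U_0$ to $\partial^{out} U_0$, and the inverse surgery consists in cutting $U_0$ along this whole annulus and capping \emph{both} resulting boundary curves --- one on $\partial^{in}$ and one on $\partial^{out}$ --- with flow-boxes $D^2\times[0,1]$ carrying the trivial vertical field. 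Your description only treats the entrance side and asserts compatibility of the extended field ``by construction,'' which hides exactly the point that needs to be arranged. You also do not address why the iteration terminates and why the germ and the ``no trivial component'' conditions persist after each cut (the surgery can disconnect $U_0$, and one must discard or control the trivial pieces).

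The uniqueness half is closer to being complete. Your observation that condition~3 of Definition~\ref{d.model} forces every complementary region of $\cL^s$ to be a disc or a strip, and that this is precisely what kills the Dehn-twist ambiguity in extending the lamination homeomorphism, is the right key point. The subtlety you flag about sliding along the asymptotic boundary leaves of a strip is real, and invoking the contracting holonomy from Proposition~\ref{p.general-properties-lamination} is the right tool to pin down the extension; but one also needs to check that the entrance-side and exit-side extensions you build independently in steps (i)--(iii) agree on the wandering orbits they both reach, which requires a compatible choice of cross-sections rather than being ``routine.'' In short: the architecture of the argument is plausible, but both halves currently contain gaps that a complete proof would have to close.
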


Let $(U,X)$ be a orientable hyperbolic plug. Given any set $A\subset U$, we denote by $\cO_X(A)$ the orbit of $A$ (in $U$) under the flow of $X$. Let $\{D_j^k\}_{j=1\dots\ell}^{k=1,2}$ be a collection of $2\ell$ pairwise disjoint closed discs in $\partial^{in} U\setminus \cL^s(U,X)$. Since each $D_j^k$ is disjoint from the lamination $\cL^s(U,X)$, for every point $x$ in $\cup_{j,k} D_j^k$, the forward orbit of $x$ eventually exits $U$ (by cutting $\partial^{out} U$). Moreover, since the surfaces $\partial^{in} U$ and $\partial^{out} U$ are transverse to the vector field $X$, the exit time of the orbit of $x$ depends continuously on $x$ (for $x$ in $\cup_{j,k} D_j^k$). Therefore, up to topological equivalence, we can ---\;and we will\;--- assume that the exit time of the forward orbit of every point $x$ in $\cup_{j,k} D_j^k$ is equal to $1$. After this topological equivalence, the restriction of $X$ to the orbit $\cO_X(D_j^k)$ is conjugated to the trivial vector field $\frac{\partial}{\partial t}$ on the product space $D_j^k\times [0,1]$.

Let $U_0:=U\setminus \bigcup_{j,k} \cO_X(\mathrm{int}(D_j^k))$. The above discussion shows that ``$U_0$ is obtained by digging $2\ell$ tunnels going from $\partial^{in} U$ to $\partial^{out} U$".  Now, we will ``glue pairwise the boundaries of these $2\ell$ tunnels".  See figure~\ref{f.handle-attachment}.

Consider a diffeomorphism $\varphi:\bigcup_j \partial D_j^1\to\bigcup_j \partial D_j^2$, so that $\varphi$ maps $\partial D_j^1$ to $\partial D_j^2$, and  reverses orientation when the curves $\partial D_j^1,\partial D_j^2$ are equipped with their orientations as boundaries.
Consider the diffeomorphism $\Phi:\bigcup_{j} \cO_X(\partial D_j^1) \to \bigcup_{j} \cO_X(\partial D_j^2)$ defined by $\Phi(X^t(x))=X^t(\varphi_j(x))$ for $x\in \partial D_j^1$. Note that this diffeomorphism is indeed well-defined and onto because we have assumed that the exit time of the orbit of every point $x$ in $\bigcup_{j} \cO_X(\partial D_j^1)$ and $\bigcup_{j} \cO_X(\partial D_j^2)$ is equal to $1$. Also note that $\Phi$ preserves the vector field $X$. Let $V:=U_0/\Phi$, and denote by $Y$ the vector field induced by $X$ on $V$ (see figure~\ref{f.handle-attachment}). One easily checks that $V$ is a manifold with boundary and that the vector field $Y$ is transverse to the boundary of $V$. Moreover, since the orbits of the discs $D_1^1,D_1^2,\dots,D_n^1,D_n^2$ are disjoint from the maximal invariant set $\bigcap_{t} X^t(U)$, the germ of vector field $Y$ along the hyperbolic set $\bigcap_{t} Y^t(V)$ coincides with the germ of the vector field $X$ along the hyperbolic set $\bigcap_{t} X^t(U)$. In particular, $(V,Y)$ is a hyperbolic plug.

\begin{definition}[Handle attachment]
\label{d.handle-attachment}
We say that $(V,Y)$ is obtained from $(U,X)$ by handle attachments on the pair of discs $(D_1^1,D_1^2), (D_2^1,D_2^2),\dots, (D_\ell^1,D_\ell^2)$.
\end{definition}

\begin{remark}
\label{r.boundary-handle-attachment}
Clearly, the entrance boundary $\partial^{in} V$ is given by $\partial^{in} V=(\partial^{in} U\setminus \bigcup_{j,k} \mathrm{int}(D_j^k))/\varphi$. One easily deduces that $\partial^{in} V$ is obtained by attaching $n$ handles on the surface $\partial^{in} U$ in the classical topological sense (see figure~\ref{f.handle-attachment}).
\end{remark}

\begin{figure}[h]
\centerline{\includegraphics[totalheight=5.4cm]{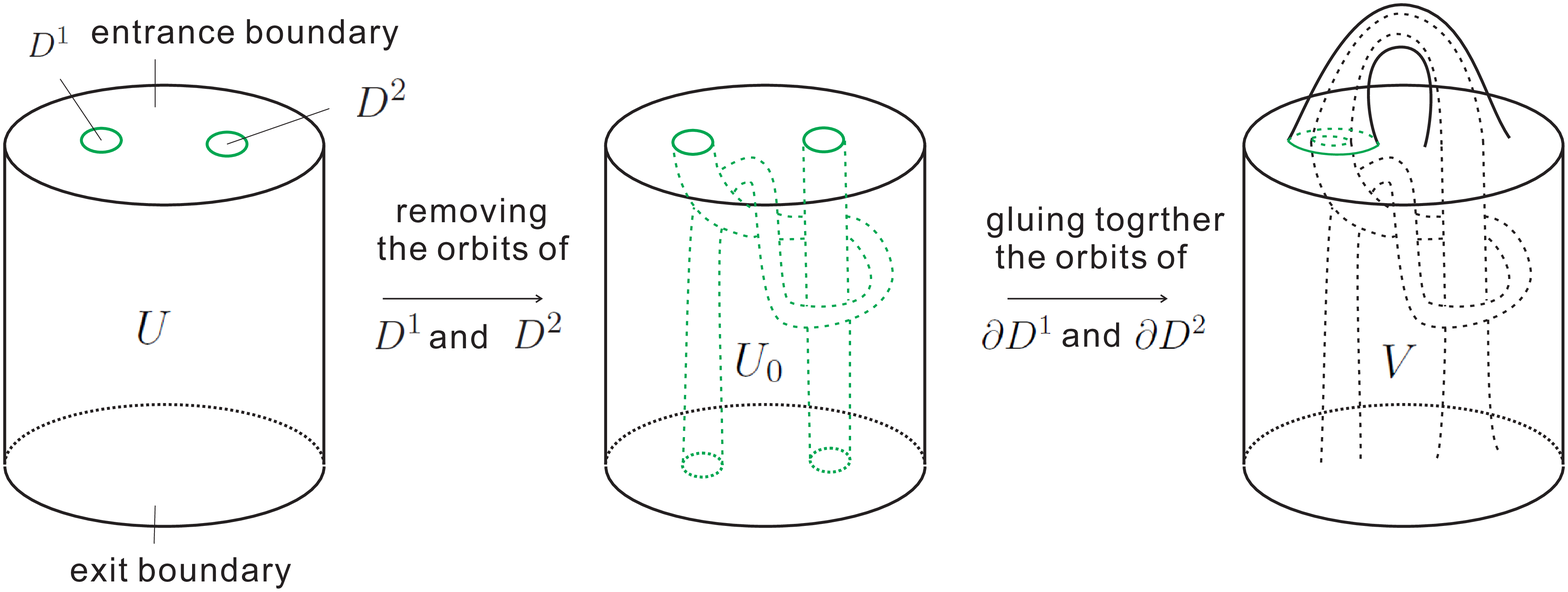}}
\caption{\label{f.handle-attachment}Handle attachment on a pair of discs $(D^1,D^2)$}
\end{figure}

\begin{proposition}[Lemma 3.2 of \cite{BeguinBonatti2002}]
Up to topological equivalence, $(V,Y)$ only depends on the connected components of $\partial^{in} U\setminus \cL^s(U,X)$ containing the discs $D_1^1,D_1^2,\dots,D_\ell^1,D_\ell^2$.
\end{proposition}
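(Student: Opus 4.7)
The plan is to split the statement into two independent sub-claims: \emph{(i) disc independence}: if $\{D_j^k\}$ and $\{\tilde D_j^k\}$ are two disc families with $D_j^k, \tilde D_j^k$ in the same connected component $C_j^k$ of $\partial^{in} U\setminus \cL^s(U,X)$, then the pre-gluing excised plugs are topologically equivalent; and \emph{(ii) gluing independence}: for a fixed disc family, any two admissible orientation-reversing gluings $\varphi,\tilde\varphi$ yield topologically equivalent quotient plugs $(V,Y)$. Both claims are proved by the same paradigm: exhibit an isotopy on the boundary data, then flow-saturate it to a topological equivalence of the plug.

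\textbf{Disc independence.} Since $C_j^k$ is a connected surface, any two smoothly embedded closed discs in its interior are related by an ambient isotopy of $C_j^k$ supported in a compact subset of $\mathrm{int}\, C_j^k$. Patching these isotopies (and extending by the identity elsewhere) yields an ambient isotopy $(\Phi_t)_{t\in[0,1]}$ of $\partial^{in} U$ with $\Phi_0=\mathrm{id}$, $\Phi_1(D_j^k)=\tilde D_j^k$, and $\Phi_t\equiv\mathrm{id}$ on some neighborhood $N$ of $\cL^s(U,X)$. I then promote $\Phi_1$ to a self-equivalence $\widehat\Phi$ of the plug $(U,X)$ as follows. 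Let $\Lambda$ denote the maximal invariant set, let $A\subset U$ be the set of points whose backward $X$-orbit meets $\partial^{in} U$ (an open set), and let $W^u_U(\Lambda):=U\setminus A$ be the complementary closed set of points whose backward orbit stays in $U$ forever. On $A$, every point is uniquely of the form $X^s(x)$ with $x\in\partial^{in} U$ and $s\ge 0$; set $\widehat\Phi(X^s(x)):=X^s(\Phi_1(x))$. On $W^u_U(\Lambda)$, set $\widehat\Phi:=\mathrm{id}$. Continuity across the interface is the delicate point: if $y_n\in A$ converges to $y\in W^u_U(\Lambda)$, writing $y_n=X^{s_n}(x_n)$, the sequence $s_n$ cannot remain bounded (else a subsequential limit would put $y$ in $A$), so $s_n\to+\infty$; this forces $x_n\to\cL^s(U,X)$, so eventually $x_n\in N$ and $\Phi_1(x_n)=x_n$, giving $\widehat\Phi(y_n)=y_n\to y=\widehat\Phi(y)$. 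Hence $\widehat\Phi$ is a topological equivalence of $(U,X)$ sending each orbit tube $\cO_X(D_j^k)$ onto $\cO_X(\tilde D_j^k)$, so the two excised plugs $U\setminus\bigcup_{j,k}\cO_X(\mathrm{int}\, D_j^k)$ and $U\setminus\bigcup_{j,k}\cO_X(\mathrm{int}\,\tilde D_j^k)$ are topologically equivalent.

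\textbf{Gluing independence and main obstacle.} Any two admissible gluings $\varphi,\tilde\varphi:\bigsqcup_j\partial D_j^1\to\bigsqcup_j\partial D_j^2$ differ on each pair $\partial D_j^1\to\partial D_j^2$ by an orientation-preserving diffeomorphism of a circle; since $\mathrm{Diff}_+(\SS^1)$ is path-connected, I can join $\varphi$ to $\tilde\varphi$ by a smooth isotopy $(\varphi_s)_{s\in[0,1]}$ of admissible diffeomorphisms. Lifting by the flow via $\Phi_s(X^t(x)):=X^t(\varphi_s(x))$ on the boundary flow-boxes gives an isotopy of identifications, and passing to quotients provides a homeomorphism $U_0/\Phi\to U_0/\tilde\Phi$ conjugating the induced vector fields. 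Concatenating with the disc-independence equivalence proves the proposition. The main obstacle of the whole argument lies in the continuity check for $\widehat\Phi$ at the boundary between the flow-transport region $A$ and the unstable locus $W^u_U(\Lambda)$; it is precisely to make this work that the isotopy $\Phi_t$ must be chosen to be the identity on an entire neighborhood of $\cL^s(U,X)$, not just on $\cL^s(U,X)$ itself. The rest of the argument is standard surface topology (homogeneity of discs in a connected surface) plus path-connectedness of $\mathrm{Diff}_+(\SS^1)$.
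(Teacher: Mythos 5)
The paper does not prove this proposition; it cites it directly as Lemma 3.2 of \cite{BeguinBonatti2002}, so there is no internal proof to compare against. Assessing your argument on its own merits: the two-step reduction (disc independence, then gluing independence) is the right decomposition, and flow-transport of a boundary isotopy supported away from the lamination is the correct mechanism. However, the central formula has a genuine flaw. You define $\widehat\Phi$ on $A$ by $\widehat\Phi(X^s(x))=X^s(\Phi_1(x))$, but this map does not send $U$ to $U$: if $\tau$ denotes the exit time from $U$, nothing guarantees $\tau(\Phi_1(x))=\tau(x)$, and the discs can easily be isotoped across the level sets of $\tau$. If $\tau(\Phi_1(x))<\tau(x)$, then for $s$ near $\tau(x)$ the point $X^s(\Phi_1(x))$ has already left $U$; if $\tau(\Phi_1(x))>\tau(x)$, then $\widehat\Phi$ sends the exit boundary point $X^{\tau(x)}(x)\in\partial^{out}U$ to an interior point of $U$. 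Either way $\widehat\Phi$ is not a self-homeomorphism of $U$, so it is not a topological self-equivalence of the plug, and the claimed conclusion (that $\cO_X(D_j^k)$ maps onto $\cO_X(\tilde D_j^k)$) fails as written. The continuity analysis at the interface with $W^u_U(\Lambda)$, which you flag as the delicate point, is in fact fine; the actual defect is upstream, in the well-definedness of the target.

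The fix is standard and preserves the spirit of your argument: work on $A':=\cO_X\bigl(\partial^{in}U\setminus\cL^s(U,X)\bigr)$, the union of transit orbits, using the flow-box coordinates $(x,u)$ with $x\in\partial^{in}U\setminus\cL^s(U,X)$ and $u=s/\tau(x)\in[0,1]$ the normalized transit time, and set $\widehat\Phi(x,u):=(\Phi_1(x),u)$; extend by the identity on $W^s_U(\Lambda)\cup W^u_U(\Lambda)$. This sends orbit segments to orbit segments (hence is a topological equivalence, even though it changes time parametrization), maps $\partial^{in}U$ and $\partial^{out}U$ to themselves, carries $\cO_X(D_j^k)$ onto $\cO_X(\tilde D_j^k)$, and your continuity check goes through verbatim since $x_n\to\cL^s(U,X)$ still forces $\Phi_1(x_n)=x_n$ eventually. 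A secondary, smaller issue is that the gluing-independence step (``passing to quotients provides a homeomorphism'') is stated without the required extension argument: one must extend $\tilde\varphi\circ\varphi^{-1}$, which is isotopic to the identity in $\mathrm{Diff}_+(\SS^1)$, to a compactly supported diffeomorphism of an annular collar of $\partial D_j^2$ in $\partial^{in}U\setminus\mathrm{int}(D_j^2)$, and then flow-transport that extension (again in normalized time) to a self-equivalence of $U_0$ that is the identity on one side of the gluing locus and $\tilde\varphi\circ\varphi^{-1}$ on the other; only then does it descend to a homeomorphism of quotients conjugating the induced vector fields. With these two repairs the argument is sound.
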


In the two following statements (Proposition~\ref{p.model-to-any} and~\ref{p.neighbourhood-Anosov-type}), we consider a hyperbolic plug $(U,X)$ with no trivial connected component. We denote by $(\widetilde U,\widetilde X)$ be the model of the germ of $X$ along the hyperbolic set $\Lambda:=\bigcap_{t} X^t(U)$ (see Definition~\ref{d.model} and Theorem~\ref{t.model}). We assume that $(U,X)$ has no trivial component (\emph{i.e.} every connected component of $U$ meets $\Lambda$). The first proposition below states that $(U,X)$ can be obtained from the model $(\widetilde U,\widetilde X)$ thanks to a finite number of handle attachments. The second proposition states some very strong restriction on the position of these handle attachments, in the particular case where $(U,X)$ is of Anosov type.

\begin{proposition}[Proposition 0.5 of \cite{BeguinBonatti2002}]
\label{p.model-to-any}
There exist a finite family of pairwise disjoint topological closed discs $D_1^1,D_1^2,D_2^1,D_2^2,\dots,D_\ell^1,D_\ell^2$ in $\partial^{in}\widetilde U\setminus \cL^s(\widetilde U,\widetilde X)$ such that, up to topological equivalence, $(U,X)$ can be obtained from $(\widetilde U,\widetilde X)$ by handle attachments on the pair of discs $(D_1^1,D_1^2),\dots,(D_\ell^1,D_\ell^2)$.
\end{proposition}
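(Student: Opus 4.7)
My plan is to identify $(U,X)$ with $(\widetilde U,\widetilde X)$ on a common neighborhood of their maximal invariant sets, then account for the discrepancy of the two outer regions by a finite sequence of handle surgeries.

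First, by Definition~\ref{d.model} the germ of $X$ along $\Lambda$ coincides with the germ of $\widetilde X$ along $\widetilde\Lambda$, so I can pick hyperbolic sub-plugs $O\subset U$ and $\widetilde O\subset \widetilde U$, whose maximal invariant sets are $\Lambda$ and $\widetilde\Lambda$, together with a topological equivalence $h:O\to\widetilde O$. Under $h$, the local entrance and exit laminations of $O$ are identified with those of $\widetilde O$; these are the same combinatorial objects as $\cL^s(U,X)$ and $\cL^s(\widetilde U,\widetilde X)$ (and their exit analogues), extended by the flow through the outer regions until they reach $\partial^{in} U$, respectively $\partial^{in}\widetilde U$.

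Next I would analyze the outer region $P:=\overline{U\setminus O}$. Because $\Lambda\subset\mathrm{int}(O)$, the maximal invariant set of $P$ is empty. The proof of Lemma~\ref{l.parallel} only uses the absence of invariant orbits and the transversality of the flow to the boundary, so the same argument shows that every connected component of $P$ is topologically equivalent to a product $\Sigma\times[0,1]$ with vertical flow, for some compact surface $\Sigma$. Thus $(U,X)$ is recovered from $(O,X_{|O})$ by attaching finitely many flow boxes along $\partial O$ on the inner side and along $\partial^{in} U\cup\partial^{out} U$ on the outer side, and the analogous description holds for $(\widetilde U,\widetilde X)$ and $\widetilde P$.

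I would then compare $P$ and $\widetilde P$ via $h$. On the inner side the attachments already agree. On the outer side, the defining property of the model says that every simple closed curve in $\partial^{in}\widetilde U\setminus\cL^s(\widetilde U,\widetilde X)$ bounds a disc in this complement, whereas $\partial^{in} U\setminus\cL^s(U,X)$ may carry essential loops. Since $\partial^{in} U$ is a compact surface carrying (via $h$) the same abstract lamination, it differs from $\partial^{in}\widetilde U$ by the topological addition of only finitely many $1$-handles. Using the disc-complementary property of the model to create room, each such $1$-handle can be realized by placing a pair of embedded discs $(D_j^1,D_j^2)$ in $\partial^{in}\widetilde U\setminus\cL^s(\widetilde U,\widetilde X)$ and performing the plug-handle attachment of Definition~\ref{d.handle-attachment}. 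Because this plug operation simultaneously installs the matching $1$-handle on the exit boundary (via the orbit tunnels from $D_j^1,D_j^2$ to $\partial^{out}\widetilde U$), after the $\ell$ attachments the resulting plug has the same inner germ and the same outer entrance/exit topology as $(U,X)$, and is therefore topologically equivalent to it.

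The main obstacle is the last paragraph: locating the pairs $(D_j^1,D_j^2)$ entirely inside the complement of $\cL^s(\widetilde U,\widetilde X)$ and verifying that their plug handle attachment really reproduces the outer topology of $U$ on both the entrance and the exit sides at once. This is where the ``every simple closed curve bounds a disc'' condition in the model is genuinely used, together with the rigidity of the trivial flow-box structure on $P$ established in the second step.
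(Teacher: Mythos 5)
This proposition is \emph{not proved in the paper under review}: it is quoted verbatim as Proposition~0.5 of the earlier Béguin--Bonatti paper \cite{BeguinBonatti2002}, and the present article simply invokes it. So there is no ``paper's own proof'' to compare your attempt against; I can only assess the sketch on its own terms.

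As a sketch, your plan captures the right heuristic picture (identify the two plugs near their maximal invariant sets via the germ equivalence, observe that the outer collars are flow boxes, and account for the topological discrepancy by handles), but it has real gaps, some of which you flag yourself. Two that deserve to be called out concretely. First, the ``product structure'' claim for $P=\overline{U\setminus O}$ is more delicate than Lemma~\ref{l.parallel}: there $N$ was connected with torus boundary, while $P$ has boundary components coming both from $\partial U$ and from $\partial O$, and the latter need not be tori; you do get a fibration over $[0,1]$ by flow lines, but you have not said what the fibers are nor how the two boundary presentations of $P$ and $\widetilde P$ are to be matched compatibly with the identification $h$ on the inner side. Second, the decisive step is asserted rather than argued: having the same germ and homeomorphic entrance/exit surfaces does not by itself force topological equivalence of the plugs, because the gluing of the outer flow boxes onto $O$ can a priori be nontrivial; the whole content of the cited result is a careful normal form for that gluing, reducing it to finitely many handle attachments supported on disjoint discs in the complement of $\cL^s(\widetilde U,\widetilde X)$, and your text does not produce those discs nor show that every ambiguity is absorbed by such attachments. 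In short: the outline is pointed in a reasonable direction, but as written it does not constitute a proof, and the missing parts are precisely the nontrivial ones established in \cite{BeguinBonatti2002}.
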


\begin{proposition}
\label{p.neighbourhood-Anosov-type}
Let $D_1^1,D_1^2,\dots,D_\ell^1,D_\ell^2$ be some discs provided by Proposition~\ref{p.model-to-any}. Denote by $C_j^k$ the connected components of $\partial^{in}\widetilde U\setminus \cL^s(\widetilde U,\widetilde X)$ containing the disc $D_j^k$. If $(U,X)$ is of Anosov type, then:
\begin{enumerate}
\item for every $j,k$, the connected component $C_j^k$ is a disc bounded by a closed leaf of $\cL^s(\widetilde U,\widetilde X)$~;
\item the connected components $C_1^1,C_1^2,\dots,C_\ell^1,C_\ell^2$ are pairwise different.
\end{enumerate}
\end{proposition}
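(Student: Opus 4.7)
The plan is to couple Proposition~\ref{p.lamination-Anosov-type}, which forces each component of $\partial^{in} U \setminus \cL^s(U,X)$ to be an annulus bounded by closed leaves or a strip, with the surgery description from Proposition~\ref{p.model-to-any}, via a short Euler characteristic computation. First I would recall that $\partial^{in} U$ is obtained from $\partial^{in}\widetilde U$ by removing the open discs $\mathrm{int}(D_j^k)$ and identifying their boundary circles pairwise, and that these discs lie in the complement of $\cL^s(\widetilde U,\widetilde X)$. Thus the lamination is unaffected, and every connected component $E$ of $\partial^{in} U \setminus \cL^s(U,X)$ is the surgery result of a uniquely determined family $C_{j_1}^{k_1},\dots,C_{j_n}^{k_n}$ of model components, glued along the $h$ handles whose discs lie in them. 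Connectedness of $E$ forces the dual graph (vertices $=$ these components, edges $=$ handles) to be connected, whence $h \geq n - 1$.

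Next I would compute $\chi(E)$. By the model property, each $C_{j_i}^{k_i}$ is simply connected; being an open subsurface of the compact surface $\partial^{in}\widetilde U$, it is contractible, so $\chi(C_{j_i}^{k_i}) = 1$. Removing an open disc decreases $\chi$ by $1$, and each cylinder or gluing circle contributes $\chi = 0$. Summing over the $n$ model components and the $2h$ removed discs yields
\[
\chi(E) \;=\; n - 2h.
\]
By Proposition~\ref{p.lamination-Anosov-type}, $E$ is either an open annulus ($\chi(E) = 0$) or a strip homeomorphic to $\RR \times (0,1)$ and hence contractible ($\chi(E) = 1$). Combined with $h \geq n-1$, the annulus case forces $n = 2h$ and $h \leq 1$, while the strip case forces $n = 2h + 1$ and $h \leq 0$. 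Any $C_j^k$ carries at least one handle disc, so $h \geq 1$; it follows that $C_j^k$ must belong to a component $E$ of annulus type with $h = 1$ and $n = 2$.

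This already yields item~2: for every $j$, the handle $(D_j^1, D_j^2)$ is the unique handle of its merged pair. Hence $C_j^1 \neq C_j^2$ (otherwise $n = 1$, giving $\chi(E) = -1$), and no $C_{j'}^{k'}$ with $j' \neq j$ can coincide with $C_j^1$ or $C_j^2$ (otherwise a second handle would be involved in $E$). For item~1, I would observe that handle attachments modify only the interiors of the surfaces, so the accessible boundary of $E$ is the union of those of $C_j^1$ and $C_j^2$. As $E$ is an annulus bounded by closed leaves, the accessible boundary of each $C_j^k$ consists of closed leaves only. The closure $\overline{C_j^k}$ in $\partial^{in}\widetilde U$ is then a simply connected compact orientable surface whose boundary is a disjoint union of circles; the classification of compact surfaces forces it to be a closed disc. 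Hence $C_j^k$ is an open disc bounded by a single closed leaf of $\cL^s(\widetilde U,\widetilde X)$, which proves item~1.

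The main obstacle is the Euler characteristic bookkeeping --- especially recognizing $\chi(\RR \times (0,1)) = 1$ via contractibility rather than by a compactification argument --- together with the observation that accessible boundaries are preserved by the surgeries. Once the identity $\chi(E) = n - 2h$ is in hand, the rigidity of the annulus/strip dichotomy makes both conclusions fall out almost mechanically.
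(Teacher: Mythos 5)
Your proposal is correct and reaches both conclusions, but by a genuinely different route from the paper. The paper's proof works with the curves $\Gamma_j := h(\pi(\partial D_j^1)) = h(\pi(\partial D_j^2))$ in $\partial^{in}U\setminus\cL^s(U,X)$. It first establishes (using the no-trivial-component hypothesis on the model) that each connected component of $\partial^{in}U\setminus\bigcup_j\Gamma_j$ meets the lamination, deduces that the $\Gamma_j$ are essential and pairwise non-homotopic in $\partial^{in}U\setminus\cL^s(U,X)$, and then invokes Proposition~\ref{p.lamination-Anosov-type} to conclude that the components $A_j$ containing them are pairwise distinct annuli bounded by closed leaves; cutting along $\Gamma_j$ and refilling the discs then gives item~1, and disjointness of the $C_j^k$ follows from that of the $A_j$. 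Your argument instead runs a dual-graph/Euler-characteristic count: each $E$ merges $n$ contractible model components via $h$ handles, so $\chi(E)=n-2h$ and connectedness gives $h\geq n-1$; Proposition~\ref{p.lamination-Anosov-type} forces $\chi(E)\in\{0,1\}$, which together with $h\geq 1$ (when $E$ involves some $C_j^k$) pins down $(n,h)=(2,1)$ and the annulus case. Both proofs apply Proposition~\ref{p.lamination-Anosov-type} to $(U,X)$ and both rely on the model property of $(\widetilde U,\widetilde X)$ --- the paper through the homotopy-class claim, you through $\chi(C_j^k)=1$ (note you also need, as in the paper, that the model has no trivial component, so that each $C_j^k$ is a proper, hence non-compact, simply connected open subsurface). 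Your rigidity argument $(n,h)=(2,1)$ yields item~2 more mechanically; your derivation of item~1 via the accessible boundary and closure is slightly informal (one should make sure the closure $\overline{C_j^k}$ is a compact surface-with-boundary, which follows from the local product structure of the lamination near closed leaves), but the intended reasoning is sound and amounts to the same cut-along-$\Gamma_j$-and-refill picture the paper uses.
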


\begin{proof}
Let $\varphi:\bigcup_j \partial D_j^1\to\bigcup_j \partial D_j^2$ be a diffeomorphism mapping $\partial D_j^1$ to $\partial D_j^2$, and  reversing orientation when the curves $\partial D_j^1,\partial D_j^2$ are equipped with their orientations as boundaries, as considered in Definition~\ref{d.handle-attachment}. Let $\pi$ be the projection of the compact surface with boundary $\partial^{in}\widetilde U\setminus \bigcup_{j,k} \mathrm{int}(D_j^k)$ on the closed surface $(\partial^{in}\widetilde U\setminus \bigcup_{j,k} \mathrm{int}(D_j^k))/\varphi$.

Up to topological equivalence, the neighborhood $(U,X)$ is obtained from $(\widetilde U,\widetilde X)$ by handle attachments on the pairs of discs $(D_1^1,D_1^2),\dots,(D_\ell^1,D_\ell^2)$. Considering the restriction of the topological equivalence to the entrance boundary of $U$, we obtain a homeomorphism $h$ between the surfaces $(\partial^{in}\widetilde U\setminus \bigcup_{j,k} \mathrm{int}(D_j^k))/\varphi$ and $\partial^{in} U$ (see remark~\ref{r.boundary-handle-attachment}). Since $h$ is the restriction of a topological equivalence, $h\circ \pi$ maps the lamination $\cL^s(\widetilde U,\widetilde X)$ on the lamination $\cL^s(U,X)$.

For every $i$, let $\Gamma_j:=h(\pi(\partial D_j^1))=h(\pi(\partial D_j^2))$. Note that $\Gamma_1,\dots,\Gamma_\ell$ are pairwise disjoint simple closed curves in $\partial^{in} U\setminus \cL^s(U,X)$. In particular, $\partial^{in} U\setminus \bigcup_j\Gamma_j$ is the interior of a compact surface with boundary. Also note that $h\circ\pi$ induces a homeomorphism between the open surfaces $\partial^{in}\widetilde U\setminus \bigcup_{j,k} D_j^k$ and $\partial^{in} U\setminus\bigcup_j \Gamma_j$.

\medskip

\textit{Claim. Each connected component of $\partial^{in} U\setminus \bigcup_j \Gamma_j$ intersects the lamination $\cL^s(U,X)$.}

\smallskip

Indeed, suppose that there exists a connected component $B$ of $\partial^{in} U\setminus \bigcup_j\Gamma_j$ which does not intersect $\cL^s(U,X)$. Then $(h\circ\pi)^{-1}(B)$ is a connected component of $\partial^{in}\widetilde U\setminus \bigcup_{j,k} D_j^k$ which does not intersect the lamination $\cL^s(\widetilde U,\widetilde X)$. Since the discs $D_j^k$'s are also disjoint from the lamination $\cL^s(\widetilde U,\widetilde X)$, it follows that $(h\circ\pi)^{-1}(B)$ is contained in a connected component of $\partial^{in} U$ which is disjoint from the lamination $\cL^s(\widetilde U,\widetilde X)$. This is impossible since $(\widetilde U,\widetilde X)$ has no trivial connected component (see Definition~\ref{d.model}).

\medskip

Now, denote by $A_i$ the connected component of $\partial^{in} U\setminus\cL^s(U,X)$ containing the curve $\Gamma_i$. The claim proved above implies that none of the curves $\Gamma_1,\dots,\Gamma_\ell$ is homotopic to $0$ in $\partial^{in} U\setminus\cL^s(U,X)$ and that the curves $\Gamma_1,\dots,\Gamma_\ell$ are pairwise non-homotopic in $\partial^{in} U\setminus\cL^s(U,X)$. Together with Proposition~\ref{p.lamination-Anosov-type} (recall that $(U,X)$ is assumed to be of Anosov type), this implies that, for every $i=1\dots\ell$, the connected component $A_i$ is an annulus bounded by to two closed leaves of the lamination $\cL^s(U,X)$, and that the annuli $A_1,\dots,A_\ell$ are pairwise different. As a consequence,
\begin{enumerate}
\item $A_j\setminus \Gamma_j$ is disjoint from $\bigcup_j \Gamma_j$~;
\item $A_j\setminus \Gamma_j$ has two connected components $A_j^1$ and $A_j^2$~;
\item each of these two connected components $A_j^1,A_j^2$ is an annulus bounded by $\Gamma_j$ and a closed leaf of $\cL^s(U,X)$.
\end{enumerate}
Recall that $C_j^k$ is the connected component of $\partial^{in} \widetilde U\setminus\cL^s(\widetilde U,\widetilde X)$ containing the disc $D_j^k$. Item 1,2,3 above imply that $(h\circ\pi)^{-1}$ is defined on the whole connected component $A_j^k$, and that $C_j^k$ is obtained by gluing the disc $D_j^k$ on one of $(h\circ\pi)^{-1}(Aj^k)$ (up to exchanging the names of $A_j^1$ and $A_j^2$). Using item~3 again, it follows that $C_j^k$ is a disc bounded by a compact leaf of $\cL^s(\widetilde U,\widetilde X)$. It also follows that, for $(j',k')\neq(j,k)$, the disc $D_{j'}^{k'}$ is disjoint from $C_j^k$~; equivalently, the connected components $C_1^1,C_1^2,\dots,C_\ell^1,C_\ell^2$ are pairwise different. This completes the proof of Proposition~\ref{p.neighbourhood-Anosov-type}.
\end{proof}

\begin{proof}[Proof of Theorem~\ref{t.finitely-many}]
Let $X$ be an Anosov vector field on a closed orientable three-manifold $M$. Let $\Lambda_1,\dots,\Lambda_m$ be the residual basic sets  provided by Theorem~\ref{t.main}. For each $i=1,\dots,m$, let $(\widetilde V_i,\widetilde X_i)$ be the model of the germ of $X$ along $\Lambda_i$.

By definition, if $V_1,\dots,V_m$ is a fine plug decomposition of $(M,X)$, then (up to permutation of the indices) $(V_i,X_{|V_i})$ is a connected hyperbolic plug with maximal invariant set $\Lambda_i$. Note that $(V_i,X_{|V_i})$ has no trivial connected component (since it is connected and has a non-empty maximal invariant set). Therefore, according to Proposition~\ref{p.model-to-any}, we can find discs $D_{i,1}^1,D_{i,1}^2,\dots,D_{i,\ell_i}^1,D_{i,\ell_i}^2$ such that, up to topological equivalence, $(V_i,X_{|V_i})$ can be obtained from $(\widetilde V_i,\widetilde X_i)$ by handle attachments on the pair of discs $(D_{i,1}^1,D_{i,1}^2),\dots,(D_{i,\ell_i}^1,D_{i,\ell_i}^2)$. Denote by $C_{i,j}^k$ the connected component of $\partial^{in} \widetilde V_i\setminus \cL^s(\widetilde V_i,\widetilde X_i)$ containing the disc $D_{i,j}^k$. By construction, $(V_i,X_{|V_i})$ is  of Anosov type (since it belongs to a plug decomposition of an Anosov vector field). So, by Proposition~\ref{p.neighbourhood-Anosov-type}, the connected components $C_{i,1}^1,C_{i,1}^2,\dots,C_{i,\ell_i}^1,C_{i,\ell_i}^2$ are pairwise different, and each of them is a topological disc bounded by closed leaf of the lamination $\cL^s(\widetilde V_i,\widetilde X_i)$. According to proposition~\ref{p.general-properties-lamination}, the lamination $\cL^s(\widetilde V_i,\widetilde X_i)$ has only finitely many closed leaves. It follows that the integer $\ell_i$ is a priori bounded, and that there are only finitely many possible choices for the connected components $C_{i,1}^1,C_{i,1}^2,\dots,C_{i,\ell_i}^1,C_{i,\ell_i}^2$. Moreover, Theorem~\ref{t.model} and~\ref{p.model-to-any} state that, up to topological equivalence, the hyperbolic plug $(V_i,X_{|V_i})$ only depends on the components $C_{i,1}^1,C_{i,1}^2,\dots,C_{i,\ell_i}^1,C_{i,\ell_i}^2$. Therefore, up to topological equivalence, there are only finitely many possibilities for the hyperbolic plug $(V_i,X_{|V_i})$. This completes the proof of Theorem~\ref{t.finitely-many}
\end{proof}

Now, we describe a particular situation, where a fine hyperbolic plug decomposition is unique. The following definition was introduced in~\cite{BeBoYu}.

\begin{definition}
\label{d.filling}
Let $(U,X)$ be an orientable hyperbolic plug. We say that $(U,X)$ has \emph{filling laminations} if every connected components of $\partial^{in} U\setminus\cL^s(U,X)$ is a strip in the sense of Definition~\ref{d.strip}.
\end{definition}

Recall that, in the particular case where $(U,X)$ is of Anosov type, Definition~\ref{d.filling} is equivalent to requiring that no connected components of $\partial^{in} U\setminus\cL^s(U,X)$ is an annulus bounded by closed leaves of  $\cL^s(U,X)$ (Proposition~\ref{p.neighbourhood-Anosov-type}). Also observe that, if $(U,X)$ has filling laminations, then every connected components of $\partial^{in} U\setminus\cL^s(U,X)$ is contractible, and therefore, $(U,X)$ is a model of the germ of $X$ along the hyperbolic set $\bigcap_t X^t(U)$.

\begin{proposition}
\label{p.filling-lamination-case}
Let $X$ be an Anosov vector field on a closed orientable three-manifold $M$. If $(M,X)$ admits  a fine plug decomposition $V_1^0,\dots,V_m^0$ so that $(V_i^0,X_{|V_i^0})$ has filling laminations for every $i$, then this fine plug decomposition is unique up to piecewise topological equivalence.
\end{proposition}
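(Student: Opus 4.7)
The plan is to combine the finiteness machinery already developed in the proof of Theorem~\ref{t.finitely-many} with the very restrictive structure of $\partial^{in}\widetilde V_i\setminus\cL^s(\widetilde V_i,\widetilde X_i)$ imposed by the filling laminations hypothesis. Concretely, let $V_1,\dots,V_m$ be any fine plug decomposition of $(M,X)$. By the Fact preceding Theorem~\ref{t.finitely-many}, any two fine plug decompositions are residually equivalent, so after reindexing we may assume that the maximal invariant set of $V_i$ coincides with that of $V_i^0$, namely the core basic set $\Lambda_i$.

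Next I would identify $(V_i^0,X_{|V_i^0})$ with the canonical model $(\widetilde V_i,\widetilde X_i)$ of the germ of $X$ along $\Lambda_i$. By the observation stated just before Proposition~\ref{p.filling-lamination-case}, any plug with filling laminations is itself a model of its germ; since $(V_i^0,X_{|V_i^0})$ has filling laminations and has maximal invariant set $\Lambda_i$, uniqueness of the model (Theorem~\ref{t.model}) yields a topological equivalence $(V_i^0,X_{|V_i^0})\simeq(\widetilde V_i,\widetilde X_i)$. Under this identification, the connected components of $\partial^{in}\widetilde V_i\setminus\cL^s(\widetilde V_i,\widetilde X_i)$ are exactly the strips provided by the filling hypothesis; in particular, none of them is a disc bounded by a closed leaf of the lamination.

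Now I would apply Proposition~\ref{p.model-to-any} to the hyperbolic plug $(V_i,X_{|V_i})$, which has no trivial component (it is connected and its maximal invariant set $\Lambda_i$ is non-empty). This produces pairs of discs $(D_{i,1}^1,D_{i,1}^2),\dots,(D_{i,\ell_i}^1,D_{i,\ell_i}^2)$ in $\partial^{in}\widetilde V_i\setminus\cL^s(\widetilde V_i,\widetilde X_i)$ such that $(V_i,X_{|V_i})$ is, up to topological equivalence, obtained from $(\widetilde V_i,\widetilde X_i)$ by handle attachments on these pairs. Since $(V_i,X_{|V_i})$ belongs to a plug decomposition of an Anosov vector field, it is of Anosov type; Proposition~\ref{p.neighbourhood-Anosov-type}(1) then forces each connected component $C_{i,j}^k$ of $\partial^{in}\widetilde V_i\setminus\cL^s(\widetilde V_i,\widetilde X_i)$ containing a disc $D_{i,j}^k$ to be a topological disc bounded by a closed leaf of $\cL^s(\widetilde V_i,\widetilde X_i)$.

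Here the filling laminations hypothesis delivers the decisive contradiction-or-triviality: no connected component of $\partial^{in}\widetilde V_i\setminus\cL^s(\widetilde V_i,\widetilde X_i)$ is a disc bounded by a closed leaf, because all components are strips with non-closed accessible boundary leaves. Consequently $\ell_i=0$: no handle attachment can be performed, and $(V_i,X_{|V_i})$ is topologically equivalent to $(\widetilde V_i,\widetilde X_i)\simeq(V_i^0,X_{|V_i^0})$. Performing this piece by piece gives the desired piecewise topological equivalence between the two fine plug decompositions. The main subtlety I expect is simply to make explicit the two identifications used above (model uniqueness together with the fact that a plug with filling laminations is its own model), so that Proposition~\ref{p.neighbourhood-Anosov-type} can be applied directly inside $(V_i^0,X_{|V_i^0})$; once this is set up, the filling hypothesis rules out all handle attachments in one stroke and there is no genuine combinatorial work left.
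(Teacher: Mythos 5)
Your proposal is correct and follows essentially the same route as the paper: identify $(V_i^0,X_{|V_i^0})$ with the model via the filling-lamination hypothesis, use Proposition~\ref{p.model-to-any} to express an arbitrary competing plug as a handle attachment, and then invoke Proposition~\ref{p.neighbourhood-Anosov-type} to conclude that the discs required for a handle attachment cannot exist when the laminations are filling. The only difference is that you spell out the two identifications (model uniqueness and the self-model property) a bit more explicitly than the paper does, which is good practice but not a new idea.
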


\begin{proof}
Let $V_1,\dots,V_m$ be an arbitrary fine plug decomposition of $(M,X)$. Note that since $(V_i^0,X_{|V_i^0})$ has filling laminations, every connected components of $\partial^{in} V_i^0\setminus\cL^s(V_i^0,X)$ is contractible, and therefore, $(V_i^0,X_{|V_i^0})$ is a model of the germ of $X$ along the hyperbolic set $\Lambda_i:=\bigcap_t X^t(V_i^0)$. Hence, for every $i$, up to topological equivalence, the hyperbolic plug $(V_i,X_{|V_i})$ is obtained from by some handles attachment $(V_i^0,X_{|V_i^0})$ (see the proof of Theorem~\ref{t.finitely-many}). But the arguments of the proof Theorem~\ref{t.finitely-many} show that each handle attachment correspond to a pair of connected components $C_{i,j}^1,C_{i,j}^2$ of $\partial^{in} V_i^0\setminus \cL^s(V_i^0,X_{|V_i^0})$ which both are topological discs bounded by closed leaves of $\cL^s(V_i^0,X_{|V_i^0})$. Since $(V_i^0,X_{|V_i^0})$ has filling laminations, such connected components do not exists. Hence, there is actually no handle attachment, \emph{i.e.} $(V_i,X_{|V_i})$ is topological equivalent to $(V_i^0,X_{|V_i^0})$. By definition, this means that the plug decompositions $V_1,\dots,V_m$ and $V_1^0,\dots,V_m^0$ are piecewise topologically equivalent.
\end{proof}

\section{Fine plug decompositions up to flow isotopy equivalence}

The purpose of this section is to build an example to prove Proposition \ref{p.infinitely-many}, \emph{i.e.} to build an Anosov vector field $Z$ on a closed manifold $M$, so that $Z$ admits with infinitely many fine plug decompositions which are pairwise not flow isotopy equivalent. Actually, the manifold $M$ and the vector field $Z$ were already constructed in the final section of our paper \cite{BeBoYu}. In that paper, we proved the existence of infinitely many pairwise non-isotopic tori transverse to $Z$. Here, we only briefly recall the construction of $(M,Z)$, and explain what are the fine plug decompositions.

\bigskip

First we consider the vector field $X_0$ on the torus $\TT^2$ defined  as follows
$$X_0(x,y)=\sin(2\pi y)\frac{\partial}{\partial x}+\sin(2\pi y)\frac{\partial}{\partial y}.$$
The non-wandering set of $X_0$ consists in four hyperbolic singularities: a source~$\alpha:=(0,0)$, two saddles $\sigma_1:=(\frac{1}{2},0)$ and $\sigma_2:=(0,\frac{1}{2})$, and a sink $\omega:=(\frac{1}{2},\frac{1}{2})$. Moreover,  the invariant manifolds of $\sigma_1$ are disjoint from the invariant manifold of $\sigma_2$. See figure~\ref{f.gradient-like}.

\begin{figure}[ht]
\begin{center}
  \includegraphics[totalheight=5cm]{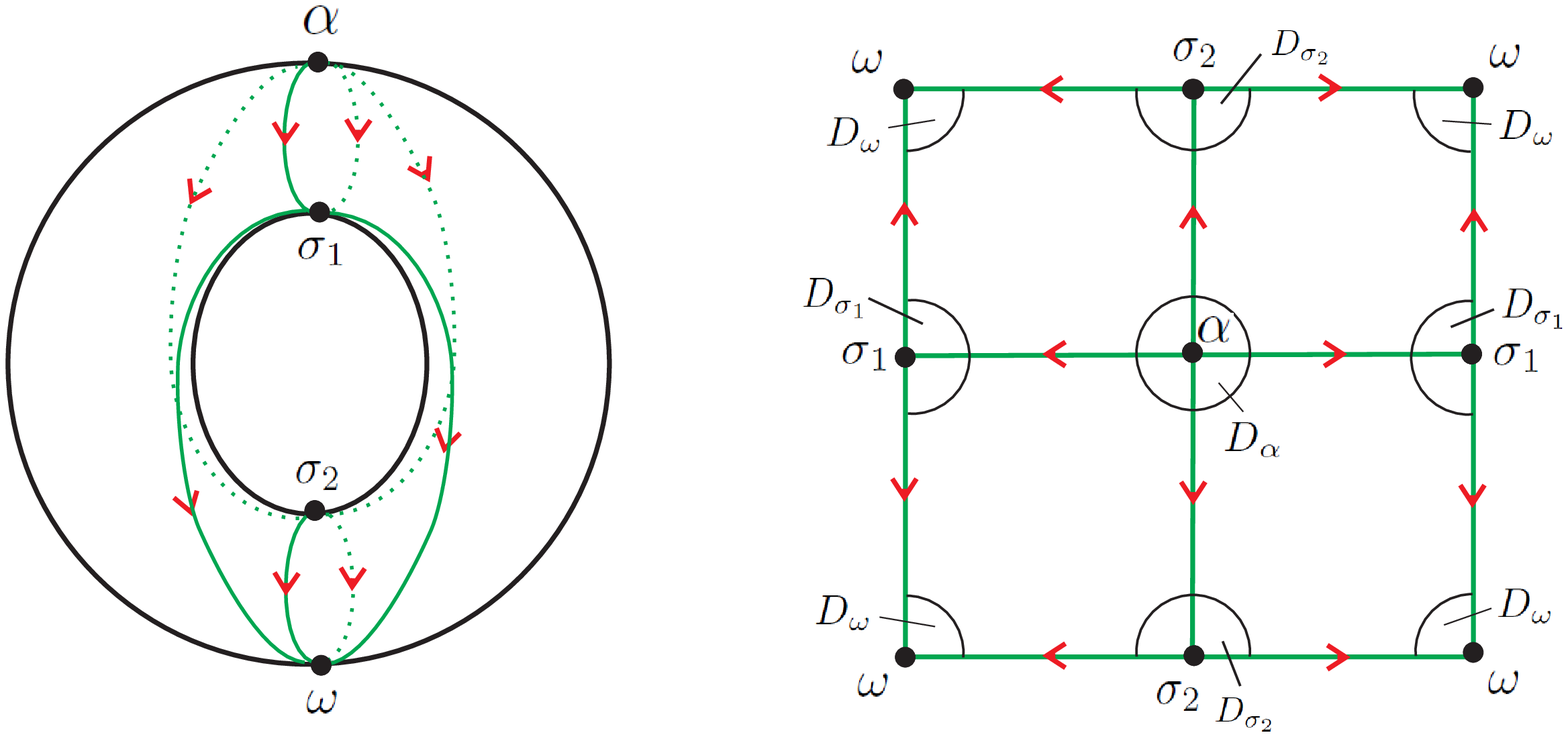}
  \caption{\label{f.gradient-like}The gradient-like vector field $X_0$}
  \end{center}
\end{figure}

Now, let us choose four small and standard open discs neighborhoods $D_\alpha,D_{\sigma_1},D_{\sigma_2},D_{\omega}$ of $\alpha,\sigma_1,\sigma_2,\omega$ respectively. We consider a smooth function $\varphi:\TT^2\to\RR$ such that $\varphi>0$ on $D_{\sigma_1}$, $\varphi<0$ on $D_{\sigma_2}$, and $\varphi=0$ on $\TT^2\setminus (D_{\sigma_1}\cup D_{\sigma_2})$ (in particular, $\varphi=0$ on $D_\alpha\cup D_\omega$). Then we consider the vector field $X$ on $\TT^2\times \SS^1$ defined by
$$X(x,y,t)= X_0(x,y)+ \varphi(x,y)\frac\partial{\partial t}.$$
We consider the compact three-manifold with boundary $U:=(\TT^2\setminus (D_\alpha\cup D_\omega))\times \SS^1$.
Then $(U,X)$ is  a hyperbolic plug. The maximal invariant set of $(U,X)$ consists in two saddle hyperbolic periodic orbits. The entrance boundary $\partial^{in} U$ (resp. the exit boundary $\partial^{out} U$) is the torus $\partial D_\alpha\times \SS^1$ (resp. $\partial D_\omega\times \SS^1$).

Now we consider two copies $(U_1,X_1)$ and $(U_2,X_2)$ of the plug $(U,X)$. We choose a diffeomorphism $\psi:\partial^{out} U_1\to \partial^{in} U_2$ (see Definition~\ref{d.laminations}) so that $(V,Y):=(U_1\sqcup U_2, X_1\sqcup X_2)/\psi$ is a hyperbolic plug with boundary composed of two tori. In \cite{BeBoYu}, we proved that there exists a diffeomorphism $\chi:\partial^{out} V\to \partial^{in} V$  such that $(M,Z):= (V,Y)/\chi$ is a transitive Anosov flow (see lemma 11.4 of \cite{BeBoYu}, and the paragraph following the proof of this lemma).

The manifold $M$ constructed above is a graph manifold (it was obtained by gluing together two copies of $\Sigma\times \SS^1$ where $\Sigma$ is the torus minus two discs), with two JSJ tori $T=\pi(\partial^{in} U_1)=\pi(\partial^{out} U_2)$ and $T'=\pi(\partial^{out} U_1)=\pi(\partial^{in}U_2)$ (where $\pi$ denotes the projection of $U_1\sqcup U_2$ on $M$).

\begin{proof}[Proof of Proposition~\ref{p.infinitely-many}]
We shall describe infinitely many fine plug decompositions of $(M,Z)$ which are pairwise not flow isotopy equivalent.

Let  $c_x$ and $c_y$ be the closed curves on $\TT^2$ defined respectively by the equations $x=\frac{1}{4}$ and $y=\frac{1}{4}$. We endow $c_x$ and $c_y$ with the orientations defined by the vector fields $\frac{\partial}{\partial y}$ and $\frac{\partial}{\partial x}$ respectively. One can easily check that the vector field $X_0$ is transverse to $c_x$ and $c_y$. One can choose a simple closed curve $b$ in $\TT^2$ which satisfies the following conditions (see the left side of Figure \ref{f.transverse-curves}).
\begin{enumerate}
  \item $b$ is transverse to $X_0$ and $b$ is disjoint with $c_y$, $D_\alpha$, $D_{\sigma_1}$, $D_{\sigma_2}$  and $D_{\omega}$.
  \item The geometrical intersectional number of $b$ and $c_x$ is $1$.
  \item $c_y$ and $b$ cut $\TT^2\setminus (D_\alpha\cup D_\omega))$ in two pants and there is exactly one saddle in each pant.
\end{enumerate}

Let $D$ be a Dehn twist about the curve $-c_x$. For $n\in \NN^+$, let $b_n$ and $c_{y,n}$ be the images of the curves  $b$ and $c_y$ under the $n^{th}$ power of $D$ (the right hand side of Figure \ref{f.transverse-curves} displays $b_n$ and $c_{y,n}$ for $n=1$). If $D$ is supported in a sufficiently small neighborhood of $c_x$, the closed curves $b_n$ and $c_{y,n}$ are transverse to $X_0$. Note that $c_{y,n}$ and $b_n$ also cut $\TT^2\setminus (D_\alpha\cup D_\omega))$ in two pants and there is exactly one saddle in each pant.

\begin{figure}[ht]
\begin{center}
  \includegraphics[totalheight=5.5cm]{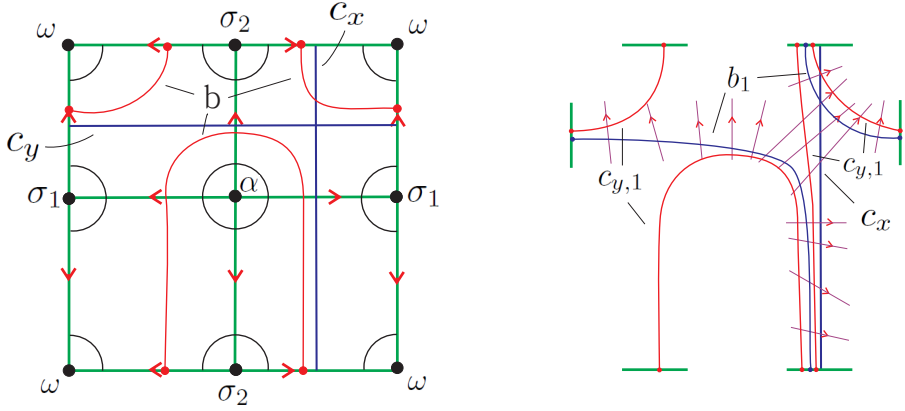}
  \caption{\label{f.transverse-curves} The transverse curves: left, $b$, $c_x$ and $c_y$; right, $b_1$ and $c_{y,1}$}
  \end{center}
\end{figure}

If $c$ is  a simple closed curve in $\TT^2\setminus (D_\alpha\cup D_\omega))$ transverse to $X_0$, then the torus $T_{c}:=c\times\SS^1$ is embedded in $U=(\TT^2\setminus (D_\alpha\cup D_\beta))\times\SS^1$ and transverse to the vector field $X$ (because $c$ is transverse to $X_0$ and  $X(x,y,t)=X_0(x,y)$ for $(x,y)\in \TT^2\setminus (D_{\sigma_1},D_{\sigma_2})$). In particular, for every $n\in\NN^+$, the tori $T_{b_n}:=b_n\times\SS^1$ and $T_{c_{y,n}}$ are transverse to $X$ in $U$.

 Now recall that $(M,Z)$ has been obtained by gluing together two copies $(U_1,X_1)$ and $(U_2,X_1)$ of $(U,X)$ along their boundary. For every curve $c$ as above, we denote by $T_c^1$ (resp. $T_c^2$) the torus $T_c$ seen in $U_1$ (resp. in $U_2$). Recall that the projections in $M$ of the boundary components of $U_1,U_2$ yields two JSJ tori $T,T'$ in $M$.  For every $n\in \NN^+$, we consider the collection of six tori set $\cT_n =\{T,T',T_b^1,T_{c_y}^1,T_{b_n}^2, T_{c_{y,n}}^2\}$.

 By the construction, $T,T',T_b^1,T_{c_y}^1,T_{b_n}^2, T_{c_{y,n}}^2$ are pairwise disjoint, pairwise non-isotopic and transverse to $Z$. Therefore, the collection $\cT_n$ defines a plug decomposition of $(M,Z)$. To prove Proposition~\ref{p.infinitely-many}, we should prove the following two claims:
 \begin{enumerate}
   \item the plug decomposition associated to $\cT_n$ is a fine plug decomposition;
   \item if $m\neq n$, the plug decompositions associated to $\cT_n$ and $\cT_m$ are not flow isotopy equivalent.
 \end{enumerate}

 For every $n$, the plug decomposition associated to $\cT_N$ is made of four connected hyperbolic plugs with maximal invariant sets $s_1^1$, $s_2^1$, $s_1^2$ and $s_2^2$  respectively. Here $s_1^i$ and $s_2^i$ are the isolated saddle periodic orbits associated to the singularities $\sigma_1$ and $\sigma_2$ in $(U_i,X_i)$.  Therefore, to prove the claim~1, we only need to prove that $\mathrm{Core}(X)= \{s_1^1, s_2^1,s_2^1,s_2^2\}$. For this purpose, recall  that:
 \begin{itemize}
 \item $M$ is a graph manifold, with two Seifert pieces, each of which is a copy $\Sigma\times \SS^1$, where $\Sigma$ is the torus minus two discs,
 \item each of the four orbits $s_1^1,s_1^2,s_2^1,s_2^2$ is isotopic to a fiber of one of the Seifert pieces of $M$.
 \end{itemize}
It follows that, for every torus $T$ embedded in $M$, the algebraic intersection number of $T$ with any of the four orbits $s_1^1,s_1^2,s_2^1,s_2^2$ is equal to $0$. If $T$ is transverse to $Z$, the algebraic intersection number of $T$ with any orbit of $Z$ coincides with the geometric intersection number. This proves that every torus $T$ transverse to $Z$ is disjoint from $s_1^1, s_2^1,s_2^1,s_2^2$. Hence, $\mathrm{Core}(X)= \{s_1^1, s_2^1,s_2^1,s_2^2\}$.

 Now we turn to prove claim~2. It is enough to prove that there is no self-homeomorphism of $M$, isotopic to the identity, and mapping the collection of tori $\cT_m$ on the collection of tori $\cT_n$.  Using the topological structure of $M$, this reduces to proving that the pairs of simple closed curves $\{b_m, c_{y,m}\}$ and  $\{b_n, c_{y,n}\}$ are not isotopic in $\TT^2\setminus (D_\alpha\cup D_\omega))$. This last fact clearly follows from the definition of the curves $b_m,b_n,c_{y,m},c_{y,n}$.
 \end{proof}

\vskip 1cm
\noindent Fran\c cois B\'eguin

\noindent {\small Laboratoire Analyse, G\'eom\'etrie, Applications - UMR 7539 du CNRS}

\noindent{\small Universit\'e Paris 13, 93430 Villetaneuse, FRANCE}

\noindent{\footnotesize{E-mail: beguin@math.univ-paris13.fr}}
\vskip 2mm

\noindent Christian Bonatti,

\noindent {\small Institut de Math\'ematiques de Bourgogne - UMR 5584 du CNRS}

\noindent {\small Universit\'e de Bourgogne, 21004 Dijon, FRANCE}

\noindent {\footnotesize{E-mail : bonatti@u-bourgogne.fr}}

\vskip 2mm

\noindent Bin Yu

\noindent {\small Department of Mathematics}

\noindent{\small Tongji University, Shanghai 2000 92, CHINA}

\noindent{\footnotesize{E-mail: binyu1980@gmail.com }}

\end{document}